\newtheorem{theorem}{Theorem}[section]
\newtheorem{proposition}[theorem]{Proposition}
\newtheorem{corollary}[theorem]{Corollary}
\newtheorem{lemma}[theorem]{Lemma}
\newtheorem{Remark}[theorem]{Remark}
\newtheorem{claim}{Claim}
\theoremstyle{definition}
\newtheorem*{definition}{Definition}
\newtheorem{example}[theorem]{Example}
\definecolor{darkblue}{rgb}{0.0, 0.0, 0.8}
\definecolor{darkred}{rgb}{0.8, 0.0, 0.0}
\definecolor{darkgreen}{rgb}{0.0, 0.8, 0.0}
\newcommand{\diam}{\mathrm{diam}}
\newcommand{\size}{\mathrm{D}}
\newcommand{\dgh}{d_\mathrm{GH}}
\newcommand{\dis}{\mathrm{dis}}
\newcommand{\R}{\mathbb{R}}
\newcommand{\intf}{d_\mathrm{I}^\mathrm{F}}
\newcommand{\intp}{d_\mathrm{I}}
\newcommand{\skeleton}[1]{\mathrm{C}(#1)}
\newcommand{\density}{\delta}
\newcommand{\power}{\mathrm{P}}
\newcommand{\degree}{\mathrm{deg}}
\newcommand{\codegree}{\mathrm{codeg}}
\newcommand{\PH}{\mathrm{PH}}
\newcommand{\VR}{\mathrm{VR}}
\newcommand{\iden}{\mathrm{id}}
\newcommand{\deltainv}{vertex quasi-distance \,}
\begin{document}

\title{ Quantitative Simplification of Filtered Simplicial Complexes\footnote{This work was partially supported by NSF grants IIS-1422400 and  CCF-1526513.}}

\author[1]{Facundo M\'emoli}
\author[2]{Osman Berat Okutan}
	\affil[1]{Department of Mathematics and Department of Computer Science and Engineering, The Ohio State University. \texttt{memoli@math.osu.edu}}
	\affil[2]{Department of Mathematics, The Ohio State University. \texttt{okutan.1@osu.edu}}

\maketitle

\begin{abstract}
We introduce a new invariant defined on the vertices of a given filtered simplicial complex, called \emph{codensity}, which controls the impact of removing vertices on persistent homology.  We achieve this control through the use of an interleaving type of distance between fitered simplicial complexes.  We study the special case of  Vietoris-Rips filtrations and show that our bounds offer a significant improvement over the immediate bounds coming from considerations related to the Gromov-Hausdorff distance.  
Based on these ideas we give an iterative method for the practical simplification of filtered simplicial complexes.

As a byproduct of our analysis we identify a notion of \emph{core} of a filtered simplicial complex which admits the interpretation as a minimalistic simplicial filtration which retains all the persistent homology information. 
 \end{abstract}


\section{Introduction}
 
Topological data analysis tries to combine and take advantage of the quantitative (but albeit often noisy) nature of Data and the qualitative nature of Topology \cite{c09}. This is done through a machinery that assigns a scale dependent family of topological spaces to given dataset and and then studying how topological properties behave as we change the scale. For a subset $I$ of $\R$, a \emph{filtered simplicial complex} indexed over $I$ is a family $(X^t)_{t\in I}$ of simplicial complexes such that for each $t \leq t'$ in $I$, $X^t$ is contained in $X^{t'}$. Filtered simplicial complexes arise in topological data analysis for example as Vietoris-Rips or \v{C}ech complexes of metric spaces \cite{eh10}. Simplicial complexes have the advantage of admitting a discrete description, hence they are naturally better suited for computations when compared to arbitrary topological spaces. 

A useful and computationally feasible way of analyzing the scale dependent features of a filtered  simplicial complex is through persistent homology and persistence diagrams/barcodes \cite{c09,eh10}. Given a filtered  simplicial complex $X^*$, for a given $k\in\mathbb{N}$, efficient computation of its $k$-th dimensional persistent homology $\PH_k(X^*)$ is studied in many papers, for example \cite{elz02, zc05,dfw14,edelsbrunner2014persistent}: Persistent homology can be computed in time cubic in the number of simplices.

Given this computational complexity, in the interest of being able to process large datasets, an important task is that of \emph{simplifying} filtered simplicial complexes (that is, reducing  the total number of total simplices) in a way such that it is possible to precisely quantify the trade-off between degree of simplification and loss/distortion of homological features \cite{elz02,ks13,s13,c15,dfw14,simba,botnan2015approximating}.

In this paper we consider the effect on persistent homology of removing a vertex and all cells containing it. In this respect, our study is related to  \cite[Section 7]{s13} and \cite[Section 6]{c15}.  A standard measure of the change in persistent homology is called the \textit{interleaving distance}, which is, by the Isometry Theorem \cite[Theorem 3.4]{l15}, isometric to the \textit{bottleneck distance} for persistent barcodes. To quantify the distortion at the persistent homology  level incurred  by operations carried out at the simplicial level, we introduce an interleaving type distance for filtered simplicial complexes which is compatible with the distance between their persistent homology signatures. More precisely,  persistent homology is stable with respect to this new metric. We bound the effect of removing a vertex with respect to this new metric in terms of a new invariant that we call the \emph{codensity} of the vertex, which in turn gives a bound on the change in persistent homology. 

\begin{wrapfigure}[19]{r}{0.28\textwidth}
		\vspace*{-0.4in}	
  \includegraphics[width=0.27\textwidth]{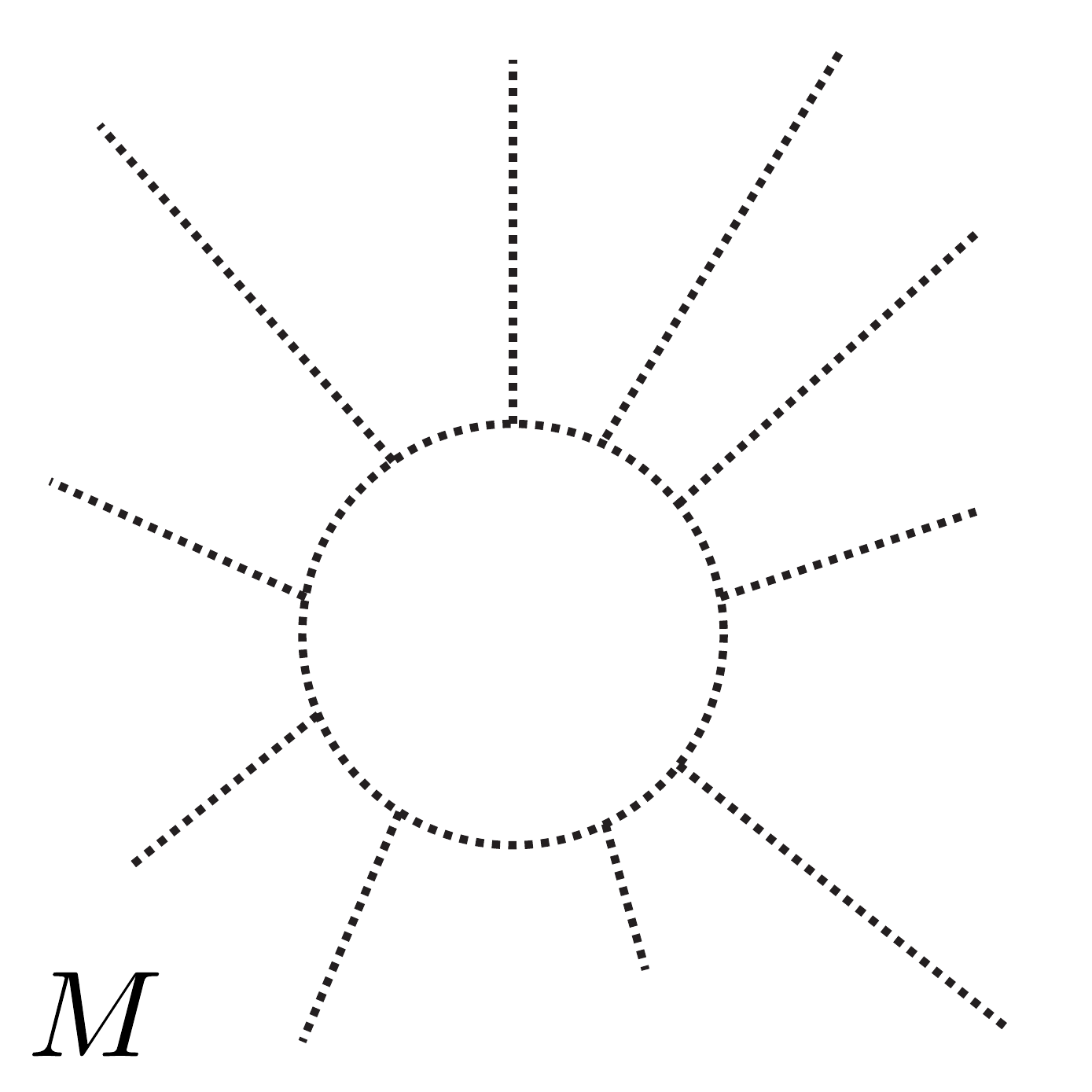}\\
  \includegraphics[width=0.27\textwidth]{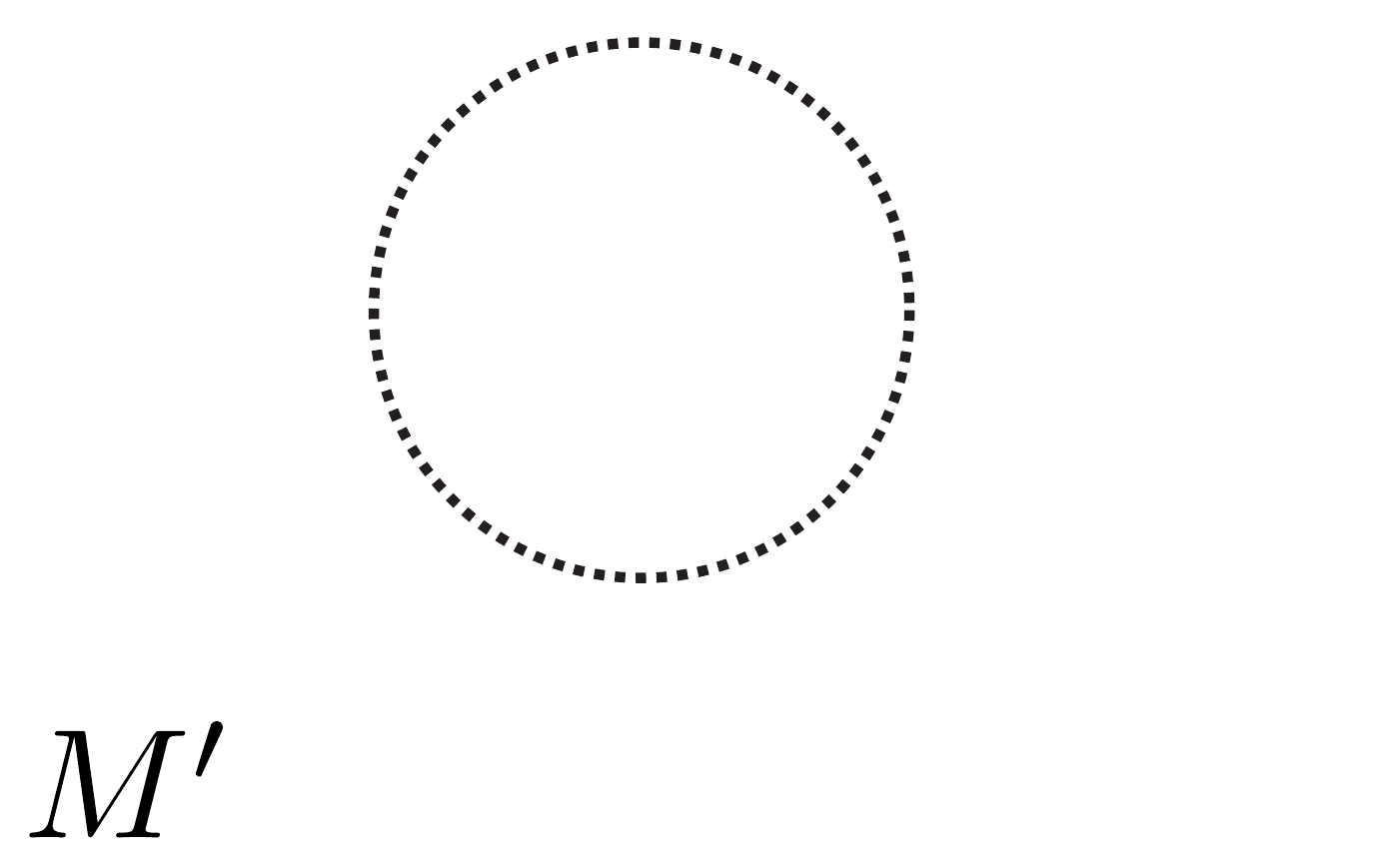}
\caption{These two finite spaces have the same Vietoris-Rips $\PH_{\geq 1}$, see Example \ref{ex:flares}.}
\label{fig:flares}
	\end{wrapfigure}
    
Applying these ideas to Vietoris-Rips complexes of finite metric spaces, in Section \ref{sec:vr} we show in particular that the Vietoris-Rips filtrations of the two finite metric spaces $M$ and $M'$ in Figure \ref{fig:flares} have the same Vietoris-Rips persistent homology in dimensions $1$ and higher: $\PH_{\geq 1}(\mathrm{VR}(M))=\PH_{\geq 1}(\mathrm{VR}(M')).$ This result does not follow from the standard stability of Vietoris-Rips persistence result \cite{dgh-rips}: In fact by increasing the length of the flares in $M$ one can make the Gromov-Hausdorff distance between $M$ and $M'$ grow without bound.

\smallskip
\noindent
\textbf{Contributions and structure of the paper.} For simplicity, in this paper we assume that all families of simplicial complexes are pointwise finite dimensional, indexed over $\R$, and constructible (i.e. changes happen at finitely many indices and the births of cells are realized).  By the functoriality of homology (with coefficients in a field), taking the homology of a filtered simplicial complex yields a persistence module, and hence a persistence barcode. As we have several notions of similarity for barcodes like the bottleneck or Wasserstein distances, a natural question to ask is what type of notions of similarity can be defined for filtered simplicial complexes so that they interact nicely with the desired distance for barcodes. 

We start Section \ref{distance} by reviewing the generalization of the Gromov-Hausdorff distance to filtered simplicial complexes given in \cite{m17}. We show that it generalizes the Gromov-Hausdorff distance between metric spaces in the sense that the Gromov-Hausdorff distance between metric spaces is equal to the Gromov-Hausdorff distance between their Vietoris-Rips complexes, using the ideas in \cite[Proposition 5.1]{m17}. We then introduce an interleaving type pseudo-distance $\intf$ for filtered simplicial complexes. By its categorical nature, interleaving type distances appear in many different settings \cite{chazal12,bmw14,bs14,l15,mbw13,bl17}. It is known that the interleaving distance between the persistent homology of Vietoris-Rips complexes of metric spaces is less than or equal to twice the Gromov-Hausdorff distance between the spaces,  see \cite{dgh-rips} and \cite[Lemma 4.3]{cdo14}. We have the following general theorem:

\begin{theorem}[Stability]\label{stb}
Let $X^*,Y^*$ be constructible filtered simplicial complexes. Then, for every $k\in\mathbb{N}$ we have
$$\intp\big(\PH_k(X^*),\PH_k(Y^*)\big) \leq \intf(X^*,Y^*) \leq 2\, \dgh(X^*,Y^*).$$ 
\end{theorem}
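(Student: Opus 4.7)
The plan is to prove the two inequalities separately, starting with the homology-level bound, which should follow essentially by functoriality, and then handling the Gromov--Hausdorff bound, which will require translating a low-distortion correspondence into a simplicial interleaving.

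For the first inequality, I would unpack the definition of $\intf$ (reviewed earlier in Section \ref{distance}): an $\epsilon$-interleaving of $X^*$ and $Y^*$ in the sense of $\intf$ should consist of simplicial maps $\phi^t \colon X^t \to Y^{t+\epsilon}$ and $\psi^t \colon Y^t \to X^{t+\epsilon}$ that are compatible with the inclusions in $X^*$ and $Y^*$ and whose compositions $\psi^{t+\epsilon}\circ\phi^t$ and $\phi^{t+\epsilon}\circ\psi^t$ agree with the structural inclusions up to the appropriate shift. Because simplicial homology in degree $k$ with field coefficients is a functor from simplicial complexes and simplicial maps to vector spaces, applying $H_k$ termwise converts these simplicial data into linear maps between the persistence modules $\PH_k(X^*)$ and $\PH_k(Y^*)$ satisfying the corresponding interleaving identities at the level of persistence modules. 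Taking the infimum over $\epsilon$ then yields $\intp(\PH_k(X^*),\PH_k(Y^*))\le\intf(X^*,Y^*)$.

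For the second inequality, I would start from the definition of $\dgh$ for filtered simplicial complexes (from \cite{m17}) in terms of correspondences: if $\dgh(X^*,Y^*)<\epsilon$, there exists a correspondence $R$ between the vertex sets of $X^*$ and $Y^*$ whose distortion with respect to the filtration values is strictly less than $2\epsilon$, in the sense that for every simplex $\sigma\in X^t$ and every choice of partners in $R$ the resulting vertex set in $Y$ is a simplex of $Y^{t+2\epsilon}$, and symmetrically. Using the axiom of choice to pick, for each vertex $x$ of $X$, a partner $\phi(x)\in R(x)$, and similarly a partner $\psi(y)\in R^{-1}(y)$ for each vertex $y$ of $Y$, this correspondence yields vertex maps $\phi\colon X\to Y$ and $\psi\colon Y\to X$ that, by the distortion bound on $R$, send simplices of $X^t$ to simplices of $Y^{t+2\epsilon}$ and of $Y^t$ to $X^{t+2\epsilon}$, and thus define simplicial maps $\phi^t\colon X^t\to Y^{t+2\epsilon}$ and $\psi^t\colon Y^t\to X^{t+2\epsilon}$. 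The two compositions are contiguous to the shifted inclusions (any two vertex choices within $R$ yield simplicially compatible maps), which is exactly what the definition of $\intf$ requires for a $2\epsilon$-interleaving; letting $\epsilon\downarrow\dgh(X^*,Y^*)$ gives the claimed bound.

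The bulk of the verification is routine given the definitions recalled in Section \ref{distance}. The main obstacle I expect is matching the bookkeeping between the $\dgh$ definition for filtered complexes of \cite{m17} and the new $\intf$, in particular confirming that the chosen vertex partners in $R$ produce genuine simplicial maps (not just set-theoretic maps) between the correctly shifted filtration levels, and that the two interleaving square-identities hold on the nose rather than only up to contiguity; the factor of $2$ enters naturally at this step because $\dgh$ is, as usual, half of the optimal distortion.
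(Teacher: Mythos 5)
Your overall strategy matches the paper's: the first inequality follows by applying the homology functor to a simplicial interleaving and using the fact that contiguous maps induce equal homology maps, and the second follows by selecting morphisms $f,g$ whose graphs lie in a low-distortion correspondence $R$ and checking that $(f,g)$ is a $\dis(R)$-interleaving (this is exactly Lemma~\ref{dis-int}, which you essentially reproduce inline).

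One caveat worth flagging: your description of what an $\epsilon$-interleaving in the sense of $\intf$ requires is stronger than the actual Definition~\ref{def:intf}. The compositions $g\circ f$ and $f\circ g$ are \emph{not} required to agree with the shifted identities, nor even to be contiguous to them in a single step; the definition only requires $\codegree^\infty(g\circ f,\iden_{X^*})\leq 2\epsilon$ and $\codegree^\infty(f\circ g,\iden_{Y^*})\leq 2\epsilon$, i.e.\ that each composition is connected to the identity by a finite chain of morphisms, each consecutive pair of which is $2\epsilon$-contiguous. (This relaxation from $\codegree$ to $\codegree^\infty$ is precisely what makes $\intf$ satisfy the triangle inequality; see Section~\ref{sec:strong-int}.) The first inequality therefore needs the observation, recorded in Remark~\ref{codegree} and Proposition~\ref{codegreeinfty}(2), that a $\codegree^\infty$ bound of $r$ still forces the two morphisms to induce the same map $\PH_k(X^*)\to\PH_k(Y^{*+r})$, because each link of the chain induces the same homology map. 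With that correction your argument goes through and coincides with the paper's proof; the second inequality as you wrote it is unaffected, since the single-step contiguity you establish there dominates $\codegree^\infty$.
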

The proof of this theorem is given in Section \ref{stability}. In the construction of the metric $\intf$  we had to pay special attention to the notion of \emph{contiguity}, a coarse way in which homotopy arises between simplicial maps; related studies appear in \cite{bm13,s13,bl17}. 

In Section \ref{simplification}, given a filtered simplicial complex $X^*$, we introduce an invariant $\delta_X(v,w) \geq 0$, called the \emph{vertex quasi-distance of $X^*$}, defined  for each pair of vertices $v$ and $w$. We then define $\density_X(v)$, the \emph{codensity} of the vertex $v$, as the minimal of $\delta_X(v,w)$ as $w$ ranges over all vertices distinct from $v$. We show that this invariant controls the contribution of a vertex to the persistent homology, in a way described in the following proposition:  

\begin{proposition}[Removal of a vertex]\label{removal}
Let $v$ be a vertex of $X^*$ and $(X-\{v\})^*$ be the full filtered subcomplex of $X^*$ obtained by removing the vertex $v$. Then,
$$\intf\big(X^*,(X-\{v\})^*\big) \leq \delta_X(v). $$
\end{proposition}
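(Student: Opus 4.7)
The plan is to exhibit an explicit $\epsilon$-interleaving with $\epsilon := \density_X(v)$. Pick a witness $w \neq v$ realizing $\density_X(v) = \delta_X(v,w)$. Based on the description of $\delta_X(v,w)$ as a \deltainv, I expect its defining property to be that whenever $\sigma \in X^t$ contains $v$, the set $\sigma \cup \{w\}$ is a simplex of $X^{t+\epsilon}$: in other words, $w$ can take over the role of $v$ in any simplex, at a filtration cost of $\epsilon$. This will be the single engine powering everything else.

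For each $t \in \R$ define a simplicial map $f_t \colon X^t \to (X-\{v\})^{t+\epsilon}$ by $v \mapsto w$ and $u \mapsto u$ for every other vertex $u$. This is well-defined: for $\sigma \in X^t$ not containing $v$, $f_t(\sigma) = \sigma$ sits in $(X-\{v\})^t \subseteq (X-\{v\})^{t+\epsilon}$; for $\sigma \in X^t$ with $v \in \sigma$, the image $f_t(\sigma) = (\sigma \setminus \{v\}) \cup \{w\}$ is contained in $\sigma \cup \{w\}$, which belongs to $X^{t+\epsilon}$ by the quasi-distance property, and clearly avoids $v$. Compatibility with the filtration inclusions $X^t \hookrightarrow X^{t'}$ is automatic since $f$ is defined vertex-wise, so the $f_t$ assemble into an $\epsilon$-shifted morphism $f \colon X^* \to (X-\{v\})^*$. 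In the reverse direction, take $g_t \colon (X-\{v\})^t \hookrightarrow X^{t+\epsilon}$ to be the natural inclusion (shifted by $\epsilon$).

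It remains to verify the two interleaving triangles up to contiguity. The composition $f_{t+\epsilon} \circ g_t \colon (X-\{v\})^t \to (X-\{v\})^{t+2\epsilon}$ is literally the inclusion, since neither $g$ nor $f$ (restricted to vertices other than $v$) moves anything. For the composition $g_{t+\epsilon} \circ f_t \colon X^t \to X^{t+2\epsilon}$ we must show contiguity to the inclusion $\iota \colon X^t \hookrightarrow X^{t+2\epsilon}$. Fix $\sigma \in X^t$: if $v \notin \sigma$ the two maps agree on $\sigma$, and if $v \in \sigma$ their union of images is $\sigma \cup \{w\}$, which lives in $X^{t+\epsilon} \subseteq X^{t+2\epsilon}$ by the defining property of $\delta_X(v,w)$. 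This is exactly the contiguity condition, so $(f,g)$ is an $\epsilon$-interleaving in the sense of $\intf$, yielding $\intf(X^*, (X-\{v\})^*) \leq \epsilon = \density_X(v)$.

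The main obstacle I anticipate is purely bookkeeping rather than conceptual: lining up the argument with whatever precise definition of $\intf$ the authors give in Section~\ref{distance}, in particular whether contiguity is required on one or both composite triangles and whether the shift convention is $\epsilon$ or $2\epsilon$. In any case the substance is the single inclusion $\sigma \cup \{w\} \in X^{t+\delta_X(v,w)}$, which is exactly what $\delta_X(v,w)$ was designed to encode; the rest is formal.
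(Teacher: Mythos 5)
Your proof is correct and follows essentially the same route as the paper's: choose a witness $w$ realizing $\delta_X(v)=\delta_X(v,w)$, define $f$ by collapsing $v$ to $w$ and fixing everything else, pair it with the inclusion $\iota:(X-\{v\})^*\hookrightarrow X^*$, and verify $\degree(f),\degree(\iota)\leq\delta_X(v)$ and $\codegree^\infty(\iota\circ f,\iden)\leq\delta_X(v)$ (with $f\circ\iota=\iden$ exactly), all hinging on the single inequality $\size_X(\alpha\cup\{w\})\leq\size_X(\alpha\cup\{v\})+\delta_X(v)$. The only cosmetic difference is that you phrase it as a family of shifted simplicial maps $f_t:X^t\to(X-\{v\})^{t+\epsilon}$, whereas the paper works directly with the vertex map and its degree/codegree; these are the same argument in two notations.
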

This proposition shows that by computing $\delta_X(v,w)$ for all $v,w$ we have a method for simplifying a filtered simplicial complex while keeping definite guarantees in terms of the approximation error of the persistent homology. We then discuss how we can make the calculation of $\delta_X(v,w)$ simpler and how to make $\delta_X(v,w)$ smaller if we are only interested in  persistent homology of certain degrees only (e.g. $\mathrm{PH}_1)$. We then show what our constructions correspond for Vietoris-Rips complexes of metric spaces and give an example showing the advantages of our simplification guarantees to those given by the  Gromov-Hausdorff based bounds of \cite{dgh-rips}.

In Section \ref{classification}, we introduce \textit{simple} filtered simplicial complexes: We call a filtered simplicial complex  $X^*$ \textit{simple} if the condensity $\density_X(v)>0$ for each vertex $v$. Proposition \ref{removal} implies that any non-simple filtered simplicial complex can be reduced in size without changing its persistent homology. Then we show that this observation can be strengthened in the following way:

\begin{theorem}[Classification via cores]\label{cls}
For each filtered finite simplicial complex $X^*$, there exists a unique (up to isomorphism) simple filtered complex $\mathrm{C}^*$ such that $\intf(X^*,\mathrm{C}^*)=0$. Furthermore, $\mathrm{C}^*$ is a full subcomplex of $X^*$.
\end{theorem}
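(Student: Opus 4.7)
The plan is to separate existence from uniqueness. For existence I would use an iterative vertex-removal procedure, and for uniqueness I would exploit the fact that simplicity rules out any collapsing behavior in a $0$-interleaving.

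For existence, start with $X^*$ and, whenever the current complex $Y^*$ is not simple, choose any vertex $v$ with $\density_Y(v)=0$ and replace $Y^*$ by the full filtered subcomplex $(Y-\{v\})^*$. Proposition~\ref{removal} applied to $Y^*$ gives $\intf\bigl(Y^*, (Y-\{v\})^*\bigr) \leq \density_Y(v) = 0$, so by the triangle inequality for $\intf$ the distance to $X^*$ stays zero throughout. Since $X^*$ has only finitely many vertices the procedure must terminate, and the stopping criterion is precisely that every remaining vertex has positive codensity, so the terminal complex $\mathrm{C}^*$ is simple. By construction $\mathrm{C}^*$ is a full filtered subcomplex of $X^*$ and $\intf(X^*,\mathrm{C}^*)=0$.

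For uniqueness, suppose $\mathrm{C}_1^*$ and $\mathrm{C}_2^*$ are two simple filtered complexes at $\intf$-distance zero from $X^*$. The triangle inequality yields $\intf(\mathrm{C}_1^*, \mathrm{C}_2^*) = 0$, so it suffices to prove that any two simple filtered complexes at $\intf$-distance zero are isomorphic. Unpacking the definition of $\intf$ as an interleaving through simplicial maps up to contiguity, a $0$-interleaving should produce filtered simplicial maps $f\colon \mathrm{C}_1^* \to \mathrm{C}_2^*$ and $g\colon \mathrm{C}_2^* \to \mathrm{C}_1^*$ such that $g\circ f$ and $f\circ g$ are contiguous to the respective identities at every filtration value. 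The key claim is that if $f$ identified two distinct vertices $v,v'$ of $\mathrm{C}_1^*$, the composition $g\circ f$ would send both to the same vertex $u$, and its contiguity with $\iden_{\mathrm{C}_1}$ would exhibit $u$ as a witness certifying $\delta_{\mathrm{C}_1}(v,v')=0$, contradicting simplicity of $\mathrm{C}_1^*$. Hence $f$, and symmetrically $g$, are injective on vertices; a cardinality argument combined with the interleaving identities upgrades these vertex injections to a filtered simplicial isomorphism.

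The main obstacle is the last step: precisely translating the contiguity data in a $0$-interleaving into rigid constraints on vertex maps, and showing that any failure of injectivity or surjectivity produces a pair of distinct vertices with vanishing quasi-distance. This requires a careful unpacking of how $\delta_X(v,w)$ is defined in terms of contiguous simplicial maps on the relevant full subcomplexes, and I expect most of the technical work to concentrate there. The remainder of the argument is pure bookkeeping with finiteness and the triangle inequality for $\intf$.
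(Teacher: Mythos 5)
Your existence argument matches the paper's: iteratively remove vertices with zero codensity, use Proposition~\ref{removal} plus the triangle inequality to keep $\intf$ at zero, and invoke finiteness to terminate at a simple full subcomplex. That part is fine.

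For uniqueness, your high-level plan (reduce to showing two simple complexes at $\intf$-distance zero are isomorphic, then use simplicity to rigidify the interleaving maps) is also the same as the paper's, but there are two genuine gaps in the way you execute it. First, you unpack the $0$-interleaving as saying ``$g\circ f$ and $f\circ g$ are contiguous to the respective identities at every filtration value,'' but the definition of $\intf$ uses $\codegree^\infty$, not $\codegree$: the condition is only that $g\circ f$ can be connected to $\iden$ by a \emph{chain} of morphisms with pairwise codegree $0$. You never address how to get from a chain back to a single contiguity statement about $g\circ f$. Second, your key claim --- that if $f(v)=f(v')$ then the composite $g\circ f$ sends both to a common $u$ which ``certifies $\delta_{C_1}(v,v')=0$'' --- is not substantiated, and as stated it does not follow: $\delta_{C_1}(v,v')$ is a statement about the size function, and the mere existence of a morphism collapsing $v$ and $v'$ to $u$ does not, on its own, control $\size(\alpha\cup\{v'\})-\size(\alpha\cup\{v\})$ for all $\alpha$. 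What actually makes the argument work in the paper is Lemma~\ref{delta-identity}: for \emph{any} non-identity endomorphism $h$, $\codegree^\infty(h,\iden)\geq\delta(X^*)$. Its proof is exactly the missing technical step --- take the chain $\iden=f_0,\dots,f_n=h$, look at the first non-identity $f_1$, pick a vertex $v$ with $w:=f_1(v)\neq v$, and use $\codegree(f_1,\iden)\leq\delta$ together with the estimate $\size(\alpha\cup\{w\})\leq\size\big((\alpha\cup\{v\})\cup(f_1(\alpha)\cup\{w\})\big)\leq\size(\alpha\cup\{v\})+\delta$ to conclude $\delta_X(v,w)\leq\delta$. Applied with $h=g\circ f$ and $\delta=0<\delta(C_1^*)$, this forces $g\circ f=\iden$ outright (and symmetrically $f\circ g=\iden$), so $f$ is a bijection; since $\degree(f)=\degree(g)=0$ it is size-preserving, hence an isomorphism. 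This also dispenses with your ``cardinality argument'' step, which as written is too vague to carry the conclusion. You correctly identified where the hard part lives, but the hard part is precisely the content of Lemma~\ref{delta-identity}, and your sketch of it does not go through.
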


Hence simple filtered complexes classify filtered complexes with respect to $\intf$. We denote $\mathrm{C}^*$ described in Theorem \ref{cls} by $\skeleton{X^*} \subseteq X^*$ and call it  the \textbf{core} of $X^*$. 

Theorem \ref{cls}  above can then be interpreted as follows.  \textit{Equivalence} (i.e. $\intf=0$) between filtered simplicial complexes coincides with  isomorphism between their respective cores: Namely $\intf(X^*,Y^*)=0$ if and only if $\skeleton{X^*}$ and $\skeleton{Y^*}$ are isomorphic. In particular this implies that the number of elements in the core is a well defined invariant. More precisely, the core of a filtered simplicial complex $X^*$ coincides with the minimal cardinality filtered simplicial complex at zero $\intf$ distance from $X^*$ (Corollary \ref{coro:min-core}).

We obtain Theorem \ref{cls} as a corollary of a more general statement  (Proposition \ref{gh-intf}) which says that between simple filtered simplicial complexes, for \textit{small enough} distances, $2\dgh$ and $\intf$ coincide and furthermore this coincidence is realized through specific bijective maps. 

In Section \ref{example}, we give a construction depending on a parameter $r \geq 0$ which extends a filtered simplicial complex so that its $\intf$ distance to the original space is $0$, while the $\dgh$ distance is at least $r/2$. This shows that $\intf$ can be much smaller than $\dgh$.

\section{Gromov-Hausdorff and interleaving type distances between filtered simplicial complexes}\label{distance}

Given a finite set $V$ we denote the power set of $V$ minus the empty set by $\power(V)$. Given a metric space $(X,d_X)$ the \emph{diameter} function is defined by $\diam_X:\power(X)\rightarrow \R_+$, where $\sigma\mapsto \max_{x,x'\in \sigma} d_X(x,x').$   By $\overline{\R}$ we will mean the extended reals $\R\cup\{-\infty,+\infty\}.$

\subsection{Gromov-Hausdorff distance between filtered simplicial complexes}\label{sec:dGH}

 We define the \textit{vertex set} of a filtered simplicial complex as the union of the vertex sets of its components (i.e. individual $X^t$'s). 
 
\begin{definition}[Size function]
Given a finite filtered simplicial complex $X^*$ with vertex set $V$ define the \textit{size function} $ \size_X : \power(V)\to \overline{\R}$ as follows: $\size_X(\alpha):=\inf \{r: \alpha \in X^r \}. $\end{definition}
 
Note that if $\alpha$ is not contained in $X^r$ for any $r$ then $\size_X(\alpha)=\infty$ and if it is contained in all $X^r$ then $ \size_X(\alpha)=-\infty$. Also, by the constructibility the condition $\size_X(\alpha)$ is realized as the minimum if it is finite. Note that if $\alpha \subseteq \alpha'$, then $\size_X(\alpha) \leq \size_X(\alpha')$.

\begin{Remark}\label{diameter}
If $X^*$ is the Vietoris-Rips complex of a metric space, then $\size_X \equiv \diam_X$.
\end{Remark}

Conversely, if we have $\size: \power(V) \to \overline{\R}$ monotonic with respect to inclusion, then we can define a filtered simplicial complex $X_\size^*$ with the vertex set $V$ by 
$X_\size^r:= \{\alpha: \size(\alpha)\leq r  \}.$

\begin{Remark}
These constructions are inverses of each other, more precisely $\size \equiv \size_{X_\size}$ and $X^\ast=X^\ast_{\size_X}.$ Hence a filtered simplicial complex is uniquely determined by its size function.
\end{Remark}

We now review a notion of distance between filtered simplicial complexes \cite{m17}.
\begin{definition}[Tripods and distortion]
A \textit{tripod} between $X^*,Y^*$ with vertex sets $V,W$ respectively is a finite set $Z$ with surjective maps $p_X: Z \to V$ and $p_Y: Z \to W$. The \textit{distortion} $\dis(Z)$ of a tripod $(Z,p_X,p_Y)$ is defined by $\max_{\alpha \in \power(Z)} |\size_X(p_X(\alpha))-\size_Y(p_Y(\alpha))|$, 
where the convention $\infty - \infty =0$ is assumed.
\end{definition}

\begin{definition}[Gromov-Hausdorff distance between filtered simplicial complexes] The Gromov-Hausdorff distance between the filtered simplicial complexes $X^\ast$ and $Y^\ast$ is  
\[\dgh(X^*,Y^*):=\frac{1}{2}\inf \left\{\dis(Z): Z \text{ a tripod between } X^*\,\mbox{and}\,Y^* \right\}.\]
\end{definition}

Note that the product of vertex sets with the projection maps gives  a tripod and if the size functions are finite then the distortion of this tripod is finite. Hence, the Gromov-Hausdorff distance between filtered simplicial complexes with finite size functions is finite.

\begin{Remark}\label{correspondence}
Given a tripod $(Z,p_X,p_Y)$, let $R=\{(p_X(z),p_Y(z)):z \in Z\} \subseteq V \times W$. If we denote the projection maps $V \times W \to V,W$ by $\pi_1,\pi_2$, then $(R,\pi_1,\pi_2)$ is a tripod between $X^*,Y^*$. Furthermore, $\dis(Z)=\dis(R)$. Since the vertex sets $V,W$ are assumed to be finite, there are finitely many such $R$'s. Therefore, the infimum in the definition of $\dgh$ is realized.
\end{Remark}

The definition of the Gromov-Hausdorff distance between filtered spaces generalizes the Gromov-Hausdorff distance between metric spaces:
\begin{proposition}[Extension]\label{extension}
Let $M,N$ be finite metric spaces and $X^*,Y^*$ be their Vietoris-Rips complexes respectively. Then $\dgh(M,N)=\dgh(X^*,Y^*). $
\end{proposition}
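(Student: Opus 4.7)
The plan is to reduce both distances to an infimum over correspondences and then match the two distortion definitions pointwise. The key ingredients are already in place: Remark \ref{correspondence} tells us the infimum defining $\dgh(X^*,Y^*)$ can be realized by correspondences $R\subseteq V\times W$ (where $V$, $W$ are the vertex sets, i.e.\ the underlying sets of $M$ and $N$), and Remark \ref{diameter} identifies $\size_X$ with $\diam_M$ and $\size_Y$ with $\diam_N$. The classical definition of $\dgh(M,N)$ is also $\tfrac{1}{2}\inf_R \dis_{MN}(R)$, where $\dis_{MN}(R) = \max_{(m,n),(m',n')\in R}|d_M(m,m')-d_N(n,n')|$ and $R$ ranges over correspondences.

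So it suffices to show that for every correspondence $R\subseteq M\times N$ one has $\dis_{MN}(R) = \dis_{XY}(R)$, where $\dis_{XY}(R) = \max_{\alpha\in\power(R)}|\diam_M(\pi_1(\alpha))-\diam_N(\pi_2(\alpha))|$. Taking the infimum over $R$ on both sides then yields the claimed equality.

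For the inequality $\dis_{MN}(R)\le\dis_{XY}(R)$, I would simply plug in two-element subsets: given $(m,n),(m',n')\in R$, the set $\alpha=\{(m,n),(m',n')\}\in\power(R)$ satisfies $\diam_M(\pi_1(\alpha))=d_M(m,m')$ and $\diam_N(\pi_2(\alpha))=d_N(n,n')$, so the $MN$-distortion term is realized among the $XY$-distortion terms.

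For the reverse inequality $\dis_{XY}(R)\le\dis_{MN}(R)$, fix $\alpha\in\power(R)$ and set $A=\pi_1(\alpha)$, $B=\pi_2(\alpha)$. Without loss of generality assume $\diam_M(A)\ge\diam_N(B)$ (the other case is symmetric). Choose $m,m'\in A$ realizing $\diam_M(A)=d_M(m,m')$, pick witnesses $(m,n),(m',n')\in\alpha$, and use $d_N(n,n')\le\diam_N(B)$ to obtain
\[
0\le\diam_M(A)-\diam_N(B)\le d_M(m,m')-d_N(n,n')\le\dis_{MN}(R).
\]
This is the only step requiring a small argument, and is the mild technical heart of the proof; everything else is bookkeeping. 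Combining the two inequalities gives $\dis_{MN}(R)=\dis_{XY}(R)$, and passing to the infimum yields $\dgh(M,N)=\dgh(X^*,Y^*)$.
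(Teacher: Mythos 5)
Your proposal is correct and follows essentially the same route as the paper's proof: reduce to correspondences via Remark \ref{correspondence}, identify size with diameter via Remark \ref{diameter}, and prove the two inequalities between distortions using two-element subsets in one direction and a diameter-realizing pair with witnesses in the other.
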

\begin{proof}
Let $R$ be a correspondence between $M,N$ (i.e. $R \subseteq M \times N$ and $\pi_M(R)=M$, $\pi_N(R)=N$). Note that $R$ can be considered as a tripod between $X^*,Y^*$. By Remark \ref{correspondence}, it is enough to show that the distortion of $R$ as a metric correspondence between $M,N$ is same with the distortion of $R$ as a tripod between $X^*,Y^*$. Let us denote the first one by $\dis^{met}(R)$ and the second one by $\dis^{tri}(R)$.

\begin{claim}
$\dis^{tri}(R) \geq \dis^{met}(R).$
\end{claim}
\begin{proof}
By Remark \ref{diameter}, the size functions of $X^*,Y^*$ are given by the diameter. Hence we have:
\begin{align*}
\dis^{tri}(R) &\geq \max_{(x,y),(x',y') \in R} |\diam_M(x,x')-\diam_N(y,y')|\\
		  &= \max_{(x,y),(x',y') \in R} |d_M(x,x')-d_N(y,y')|\\
          &=\dis^{met}(R).
\end{align*}
\end{proof}

\begin{claim}
$\dis^{tri}(R) \leq \dis^{met}(R).$ 
\end{claim}
\begin{proof}Let $\alpha \in \power(R)$. Let $\alpha \in \power(R)$ such that $$\dis^{tri}(R)=|\diam_M(\pi_M(\alpha))-\diam_N(\pi_N(\alpha))|.$$ Without loss of generality, we can assume that $$\diam_M(\pi_M(\alpha)) \geq \diam_N(\pi_N(\alpha)).$$ Let $x,x'$ be points in $\pi_M(\alpha)$ so that $$\diam_M(\pi_M(\alpha))=d_M(x,x'). $$ There exists points $y,y'$ in $N$ such that $(x,y),(x',y') \in \alpha$. Then we have
\begin{align*}
\dis^{tri}(R) &= \diam_M(\pi_M(\alpha))-\diam_N(\pi_N(\alpha)) \\
			  &= d_M(x,x')-\diam_N(\pi_N(\alpha))\\
              &\leq d_M(x,x')-d_N(y,y')\\
              &\leq \dis^{met}(R).
\end{align*}
\end{proof}
\end{proof}

We also have:
\begin{proposition}\label{ghmetric}
$\dgh$ is a (pseudo-)metric between filtered simplicial complexes.
\end{proposition}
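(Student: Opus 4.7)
The plan is to verify the three defining conditions of a pseudo-metric in turn, with the triangle inequality as the main piece of work. Non-negativity is immediate from the definition. By Remark \ref{correspondence} the infimum in the definition is in fact a minimum over a finite collection of tripods arising from subsets of $V\times W$, so there is no issue with the definition.

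For $\dgh(X^*,X^*) = 0$, I would take the diagonal tripod $Z = V$ with $p_X = p'_X = \iden_V$. For every $\alpha \in \power(V)$ the two images coincide, so the distortion is $0$. Symmetry is obvious since the definition of a tripod is symmetric in $X^*$ and $Y^*$ and the distortion is defined via absolute values.

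The triangle inequality is the main step. Given filtered complexes $X^*, Y^*, W^*$ with vertex sets $V, U, W$, fix tripods $(Z_1, p_X, p_Y)$ between $X^*, Y^*$ and $(Z_2, q_Y, q_W)$ between $Y^*, W^*$. I would form the fiber product
\[
Z \;=\; \{(z_1, z_2) \in Z_1 \times Z_2 : p_Y(z_1) = q_Y(z_2)\}
\]
with projection maps $\pi_X(z_1, z_2) = p_X(z_1)$ and $\pi_W(z_1, z_2) = q_W(z_2)$. Surjectivity of $\pi_X$ and $\pi_W$ follows from the surjectivity of $p_X, p_Y, q_Y, q_W$: given $v \in V$, pick $z_1 \in p_X^{-1}(v)$, set $u = p_Y(z_1)$, pick $z_2 \in q_Y^{-1}(u)$, and observe $(z_1, z_2) \in Z$ with $\pi_X(z_1, z_2) = v$; symmetrically for $W$. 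For distortion, let $\alpha \in \power(Z)$ with projections $\alpha_i$ to $Z_i$. The fiber-product condition gives $p_Y(\alpha_1) = q_Y(\alpha_2) =: \beta \in \power(U)$, and applying the triangle inequality in $\overline{\R}$ to
\[
\bigl|\size_X(\pi_X(\alpha)) - \size_W(\pi_W(\alpha))\bigr| \;\leq\; \bigl|\size_X(p_X(\alpha_1)) - \size_Y(\beta)\bigr| + \bigl|\size_Y(\beta) - \size_W(q_W(\alpha_2))\bigr|
\]
yields the bound $\dis(Z_1) + \dis(Z_2)$. Taking the infimum over the choice of $Z_1$ and $Z_2$ gives $\dgh(X^*, W^*) \leq \dgh(X^*, Y^*) + \dgh(Y^*, W^*)$.

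The one subtlety I expect to need care with is the convention $\infty - \infty = 0$ inside the distortion and how it interacts with the above triangle inequality in $\overline{\R}$. The way around this is to observe that if $\dis(Z_1)$ is finite, then $\size_X(p_X(\alpha_1))$ and $\size_Y(p_Y(\alpha_1))$ must lie in the same one of the three classes $\R$, $\{+\infty\}$, or $\{-\infty\}$ (otherwise their difference would be infinite), and similarly for $\dis(Z_2)$; a case split on whether $\size_Y(\beta)$ is finite, $+\infty$, or $-\infty$ then propagates the same infinity to the two endpoints and the bound holds trivially. When both $\dis(Z_i)$ are finite, all three quantities live in $\R$ and the usual triangle inequality closes the argument.
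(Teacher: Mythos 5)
Your proof is correct and takes essentially the same approach as the paper: identity tripod for reflexivity, symmetry from the definition, and the fiber product construction for the triangle inequality. The extra care you devote to surjectivity of the fiber-product projections and to the $\infty - \infty = 0$ convention is sound but is elided in the paper's treatment.
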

\begin{proof}
Non-negativity and symmetry properties follows from the definition. $\dgh(X^*,X^*)=0$ since the distortion of the \textit{identity tripod} on the vertex set of $X$ is $0$. Let us show the triangle inequality. Let $(Z,p,p')$ be a tripod between $X^*,X'^*$ and $(Z',q',q'')$ be a tripod between $X'^*,X''^*$. Let $Z''$ be the fiber product $$Z''=Z {_{p'}\times_{q'}} Z' .$$ Then $(Z'',p \circ \pi_Z,q'' \circ \pi_{Z'})$ is a tripod between $X^*,X''^*$. Given $\alpha \in \power(Z'')$, we have
\begin{align*}
|\size_X(p \circ \pi_Z(\alpha))- \size_{X''}(q'' \circ \pi_{Z'})| & \leq 
|\size_X(p \circ \pi_Z(\alpha))-\size_{X'}(p' \circ \pi_Z(\alpha))| \\ &\quad +|\size_{X'}(p' \circ \pi_Z(\alpha)) - \size_{X''}(q'' \circ \pi_{Z'}(\alpha))| \\
&=|\size_X(p \circ \pi_Z(\alpha))-\size_{X'}(p' \circ \pi_Z(\alpha))| \\ &\quad +|\size_{X'}(q' \circ \pi_{Z'}(\alpha)) - \size_{X''}(q'' \circ \pi_{Z'}(\alpha))| \\
&\leq \dis(Z)+\dis(Z').
\end{align*}
Since $\alpha \in \power(Z'')$ was arbitrary, $\dis(Z'') \leq \dis(Z')+\dis(Z'')$. Since the tripods $Z,Z'$ were arbitrary, $\dgh(X^*,X''^*) \leq \dgh(X^*,X'^*)+\dgh(X'^*,X''^*)$.
\end{proof}

\begin{definition}[Isomorphism and weak isomorphism]\label{def:iso}
We call two filtered simplicial complexes \textit{isomorphic} if there exists a size preserving bijection between their vertex sets. We call two filtered simplicial complexes \textit{weakly isomorphic} if their Gromov-Hausdorff distance is $0$.
\end{definition}

\begin{Remark}
Isomorphism implies weak isomorphism. This can be seen by taking the graph of the size preserving bijection with the natural projection maps as the tripod.
\end{Remark}

\begin{example}[A pair of non-isomorphic but weakly isomorphic filtered simplicial complexes]
Given a positive integer $n$ and a real number $c$, let $X^*(n,c)$ be the filtered simplicial complex with vertex set $\{1,\dots,n \}$ and the constant size function equal to $c$. Note that for any $n,m\in\mathbb{N}$ (possibly different), the tripod between $X^*(n,c)$ and $X^*(m,c)$ given by the product of their vertex sets has zero distortion. Hence, $\dgh(X^*(n,c),X^*(m,c))=0$ which means that $X^*(n,c)$ and $X^*(m,c)$ are weakly isomorphic.
\end{example}

\subsection{The interleaving type distance $d_\mathrm{I}^\mathrm{F}$ between filtered simplicial complexes}\label{sec:dI}
Here, we introduce an interleaving type of distance between filtered simplicial complexes which interacts nicely with their persistent homology. We use the following notation/terminology: A persistence module (over $\R$) is a family of vector spaces $(V^t)_{t \in \R}$ with linear maps  $f^{t,t'}: V^t \to V^{t'}$ for $t \leq t'$ such that $f^{t,t}=\iden_{V^t}$ and for each $t \leq t' \leq t''$, $f^{t,t''}=f^{t',t''} \circ f^{t,t'}$. By the functoriality of homology, for $k\in\mathbb{N}$, the homology groups $\mathrm{H}_k(X^i)$ of a filtered simplicial complex $X^*$ form a persistence module, where the linear maps are induced by the inclusion $X^t \hookrightarrow X^{t'}$.  This persistence module is called the $k$-th \textit{persistent homology} of $X^*$ and is denoted by $\PH_k(X^*)$.

A \emph{morphism} between filtered simplicial complexes is a function between their vertex sets. 

\begin{definition}[Degree] Let $f$ be a morphism from $X^*$ to $Y^*$.  Given $r \geq 0$, we say that $f$ is $r$-\textit{simplicial} if $\size_Y(f(\alpha)) \leq \size_X(\alpha) + r$ for each $\alpha$. We define the \textit{degree} $\degree(f)$ of $f$ by $$\degree(f):=\inf\{r\geq 0: f \text{ is } r\text{-simplicial} \}. $$
By the constructibility assumption, $f$ is $\degree(f)$-simplicial. 
\end{definition}
Hence the degree of a morphism can be thought as a measure of the failure of the morphism at being simplicial.

\begin{Remark}\label{induce}
If $f: X^* \to Y^*$ is $r$-simplicial, then it induces a morphism $f_*$ from the persistence module $\PH_k(X^*)$ to $\PH_k(Y^{*+r})$, induced by the simplicial maps $X^t \to Y^{t+r}$, $\alpha \mapsto f(\alpha)$.
\end{Remark}

\begin{definition}[Codegree]
Let $f,g$ be morphisms from $X^*$ to $Y^*$. Given $r \geq 0$, we say that $f,g$ are $r$-\textit{contiguous} if $\size_Y(f(\alpha)\cup g(\alpha)) \leq \size_X(\alpha)+r$ for each $\alpha$. We define the \textit{codegree} $\codegree(f,g)$ of $f,g$ by $$\codegree(f,g):= \inf \{r \geq 0: f,g \text{ are } r\text{-contigouous} \}.$$ By the constructibility assumption, $f,g$ are $\codegree(f,g)$-contiguous.
\end{definition}

\begin{Remark}\label{codegree}
Let $f,g:X^* \to Y^*$ be morphisms of filtered simplicial complexes. Then,
\begin{enumerate}
\item $\degree(f)=\codegree(f,f) \leq \codegree(f,g)$.
\item If $f,g$ are $r$-contiguous, then they induce the same maps  $\PH_k(X^*) \to \PH_k(Y^{*+r})$, as the maps $X^t \to Y^{t+r}$ given by $\alpha \mapsto f(\alpha),g(\alpha)$ are contiguous as simplicial maps. 
\item For each morphism $h:Z^* \to X^*$, we have $\codegree(f\circ h,g\circ h) \leq \codegree(f,g)+\degree(h)$.
\item For each morphism $h:Y^* \to Z^*$, we have $\codegree(h \circ f, h \circ g) \leq \codegree(f,g) + \degree(h).$
\end{enumerate}
\end{Remark}

Assume we are given three morphsims $f,g,h$ such that $\codegree(f,g) \leq r$ and $\codegree(g,h) \leq r$. Although it is possible that $\codegree(f,h) > r$, by  part 1. of the remark above, $f,g,h$ induce the same maps $\PH_k(X^*) \to \PH_k(Y^{*+r})$. The following definition is given to capture this type of situations, see Section \ref{sec:strong-int} below.

\begin{definition}
Define $\codegree^\infty(f,g):=\min_{f=f_0,\dots,f_n=g}\max_{i = 1,\dots,n} \codegree(f_{i-1},f_i). $
\end{definition}

\begin{proposition}\label{codegreeinfty}
Let $f,g,h: X^* \to Y^*$ and $f',g':Z^* \to X^*$ be morphisms of filtered simplicial complexes.
\begin{enumerate}
\item $\degree(f)=\codegree^\infty(f,f) \leq \codegree^\infty(f,g) \leq \codegree(f,g)$.
\item If $\codegree^\infty(f,g)\leq r$, then $f,g$ induce the same maps from $\PH_k(X^*) \to \PH_k(Y^{*+r})$.
\item (Ultrametricity) $\codegree^\infty(f,h) \leq \max\big(\codegree^\infty(f,g),\codegree^\infty(g,h)\big)$.
\item $\codegree^\infty(f\circ f',g \circ g') \leq \codegree^\infty(f,g)+\codegree^\infty(f',g')$.
\end{enumerate}
\end{proposition}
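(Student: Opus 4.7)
Parts 1 and 3 are direct consequences of the definition, and I would handle them first. For Part 1, the two-term chain $(f,g)$ gives $\codegree^\infty(f,g)\leq \codegree(f,g)$, while Remark \ref{codegree}(1) gives $\codegree(f_0,f_1)\geq \degree(f_0)=\degree(f)$ for any chain starting at $f$; the trivial chain $(f,f)$ then pins down $\codegree^\infty(f,f)=\degree(f)$, and the same lower bound yields $\codegree^\infty(f,g)\geq \degree(f)=\codegree^\infty(f,f)$. Reversing chains also shows that $\codegree^\infty$ is symmetric in its arguments, so I record $\codegree^\infty(f,g)\geq \degree(g)$ as well for later use. Part 3 follows by concatenating chains realizing $\codegree^\infty(f,g)$ and $\codegree^\infty(g,h)$; such realizing chains exist because constructibility forces $\codegree$ to take only finitely many distinct values, so the infimum in the definition of $\codegree^\infty$ is actually a minimum. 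The concatenated chain has maximum consecutive codegree equal to $\max\bigl(\codegree^\infty(f,g),\codegree^\infty(g,h)\bigr)$, as needed.

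For Part 2, I would fix a chain $f=f_0,\dots,f_n=g$ realizing $\codegree^\infty(f,g)\leq r$. Each consecutive pair satisfies $\codegree(f_{i-1},f_i)\leq r$, so by Remark \ref{codegree}(2) the morphisms $f_{i-1}$ and $f_i$ induce the same map $\PH_k(X^*)\to \PH_k(Y^{*+r})$. Transitivity of equality along the chain then yields the same conclusion for $f$ and $g$.

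Part 4 is the main step and the one I expect to be most delicate. Given realizing chains $f=f_0,\dots,f_n=g$ and $f'=f'_0,\dots,f'_m=g'$, I would interleave them into the chain
\[
f\circ f' \;=\; f_0\circ f'_0,\; f_1\circ f'_0,\; \ldots,\; f_n\circ f'_0,\; f_n\circ f'_1,\; \ldots,\; f_n\circ f'_m \;=\; g\circ g'.
\]
Remark \ref{codegree}(3) bounds the successive codegrees in the first block by $\codegree(f_{i-1},f_i)+\degree(f'_0)$, and Remark \ref{codegree}(4) bounds those in the second block by $\codegree(f'_{j-1},f'_j)+\degree(f_n)$. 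The obstacle is that these bounds carry extra degree terms that must be absorbed into the target sum $\codegree^\infty(f,g)+\codegree^\infty(f',g')$; a naive application of Remark \ref{codegree} alone does not achieve this. However, Part 1 together with the symmetry noted above supplies exactly the right estimates: $\degree(f'_0)=\degree(f')\leq \codegree^\infty(f',g')$ and $\degree(f_n)=\degree(g)\leq \codegree^\infty(f,g)$. Consequently every consecutive codegree along the interleaved chain is bounded by $\codegree^\infty(f,g)+\codegree^\infty(f',g')$, and taking the max over all $i$ and $j$ finishes the proof.
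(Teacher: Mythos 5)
Your proof is correct and follows essentially the same path as the paper's. Parts 1--3 match the paper's argument almost line for line. For part 4 the paper first establishes the auxiliary bounds $\codegree^\infty(f\circ h, g\circ h)\leq \codegree^\infty(f,g)+\degree(h)$ and $\codegree^\infty(h\circ f, h\circ g)\leq \codegree^\infty(f,g)+\degree(h)$, then applies ultrametricity to the midpoint $g\circ f'$ and finishes with part 1 plus the (implicit) symmetry of $\codegree^\infty$; your interleaved chain through $f_n\circ f'_0 = g\circ f'$ is exactly the chain that this argument constructs abstractly, and you bound the same consecutive codegrees by the same Remark \ref{codegree}(3)/(4) estimates. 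Your explicit note that $\codegree^\infty$ is symmetric, hence $\degree(g)\leq\codegree^\infty(f,g)$, is a small improvement in clarity: the paper uses this fact but does not flag it, and in fact contains a minor typo in the middle line of the part 4 display (it writes $\degree(g')$ where $\degree(f')$ is meant).
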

\begin{proof}
\noindent 1. $\codegree^\infty(f,g) \leq \codegree(f,g)$ can be seen by taking $f=f_0,f_1=g$. $\degree(f) \leq \codegree(f,g)$ since for any $f=f_0,\dots,f_n=g$, $\deg(f) \leq \codegree(f^0,f^1)$. This also shows that $\degree(f)=\codegree^\infty(f,f) \leq \codegree^\infty(f,g)$.

\noindent
2. There exists $f=f_0,\dots,f_n=g$ such that $\codegree(f_{i-1},f_i) \leq r$. By Remark \ref{codegree}, $f_{i-1},f_i$ induce the same maps from $\PH(X^*)$ to $\PH(Y^{*+r})$. Hence $f_0=f,g=f_n$ also induce the same maps.

\noindent
3. Follows by concatenating sequences of functions.

\noindent
4. Let $f=f_1,\dots,f_n=g$ be the sequence realizing $\codegree^\infty(f,g)$. Then, for any morphism $h$ whose range is same with the domain of $f,g$, by Remark \ref{codegree} we have
\begin{align*}
\codegree^\infty(f\circ h, g\circ h) &\leq \max_i \codegree(f_{i-1} \circ h, f_i \circ h) \\
&\leq \max_i \codegree(f_{i-1},f_i) + \degree(h) \\
&=\codegree^\infty(f,g)+\degree(h)
\end{align*}
Similarly we have $$\codegree^\infty(h \circ f, h \circ g) \leq \codegree^\infty(f,g)+deg(h). $$
Now by using these and part i),iii) above, we get
\begin{align*}
\codegree^\infty(f \circ f', g \circ g') &\leq \max(\codegree^\infty(f \circ f',g \circ f'),\codegree^\infty(g \circ f', g \circ g')) \\
&\leq \max (\codegree^\infty(f,g)+\degree(g'),\codegree^\infty(f',g')+\degree(g))\\
&\leq \codegree^\infty(f,g)+\codegree^\infty(f',g').
\end{align*}
\end{proof}

\begin{definition}[Interleaving distance between filtered simplicial complexes]\label{def:intf}
For $\epsilon\geq 0$, an $\epsilon$-interleaving between $X^*$ and $Y^*$ consists of morphisms $f: X^* \to Y^*$, $g: Y^* \to X^*$ such that $$\degree(f),\,\degree(g) \leq \epsilon, \,\,\mbox{and}\,\, \codegree^\infty(g \circ f,\iden_{X^*}), \,\codegree^\infty(f \circ g,\iden_{Y^*}) \leq 2\epsilon.$$
In this case we say that $X^*,Y^*$ are $\epsilon$-interleaved. We define $$\intf(X^*,Y^*):=\inf \{\epsilon \geq 0: X^*,Y^* \text{ are } \epsilon \text{-interleaved} \}. $$
\end{definition}

We then have:
\begin{proposition}\label{interleavingdistance}
$\intf$ is a (pseudo-)distance between filtered simplicial complexes.
\end{proposition}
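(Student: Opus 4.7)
The plan is to verify the four standard axioms of a pseudo-metric in turn. Non-negativity is built into the definition, and symmetry follows because the $\epsilon$-interleaving condition is symmetric in the pair $(f,g)$. For $\intf(X^*,X^*)=0$, taking $f=g=\iden_{X^*}$ gives $\degree(f)=\degree(g)=0$ and $g\circ f=\iden_{X^*}$, so $X^*$ is $0$-interleaved with itself.

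The real content is the triangle inequality. Suppose $X^*$ and $Y^*$ are $\epsilon$-interleaved via $(f,g)$ and $Y^*$ and $Z^*$ are $\delta$-interleaved via $(f',g')$. I propose to witness an $(\epsilon+\delta)$-interleaving between $X^*$ and $Z^*$ via the obvious composites $F := f' \circ f$ and $G := g \circ g'$. The degree bounds are straightforward: using Proposition \ref{codegreeinfty}(1) to identify $\degree$ with $\codegree^\infty$ of a function with itself, and then part (4) of the same proposition,
$$\degree(F) = \codegree^\infty(F,F) \leq \codegree^\infty(f',f')+\codegree^\infty(f,f) = \degree(f')+\degree(f) \leq \delta+\epsilon,$$
and symmetrically $\degree(G) \leq \epsilon+\delta$.

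The codegree condition is the main step. I need $\codegree^\infty(G\circ F,\iden_{X^*}) \leq 2(\epsilon+\delta)$. Write $G \circ F = g \circ (g'\circ f') \circ f$ and insert the intermediate map $g \circ \iden_{Y^*} \circ f = g\circ f$. Applying part (4) of Proposition \ref{codegreeinfty} iteratively to the triple composition yields
$$\codegree^\infty\bigl(g\circ(g'\circ f')\circ f,\; g\circ f\bigr) \leq \degree(g)+\codegree^\infty(g'\circ f',\iden_{Y^*})+\degree(f) \leq \epsilon+2\delta+\epsilon,$$
while the $\epsilon$-interleaving hypothesis gives $\codegree^\infty(g\circ f,\iden_{X^*}) \leq 2\epsilon$. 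The ultrametric inequality in Proposition \ref{codegreeinfty}(3) then combines these:
$$\codegree^\infty(G\circ F,\iden_{X^*}) \leq \max(2\epsilon+2\delta,\;2\epsilon) = 2(\epsilon+\delta).$$
The analogous computation on the other side gives $\codegree^\infty(F\circ G,\iden_{Z^*})\leq 2(\epsilon+\delta)$. Taking infima over the interleaving parameters yields $\intf(X^*,Z^*) \leq \intf(X^*,Y^*)+\intf(Y^*,Z^*)$.

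The only subtle point is the bookkeeping in the codegree step: applying part (4) of Proposition \ref{codegreeinfty} to a three-fold composition requires splitting it as a pair of compositions and invoking the inequality twice, and it is crucial that $\codegree^\infty$ (rather than $\codegree$) is used throughout so that the ultrametric step is available to glue the bound on $G\circ F \sim g\circ f$ to the bound on $g\circ f \sim \iden_{X^*}$. Everything else is routine.
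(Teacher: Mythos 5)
Your proof is correct and follows essentially the same route as the paper's: the composites $f'\circ f$ and $g\circ g'$, the insertion of $g\circ f$ as the intermediate term, a two-fold application of Proposition \ref{codegreeinfty}(4) to bound $\codegree^\infty(G\circ F, g\circ f)$, and the ultrametric inequality (3) to combine with the hypothesis on $g\circ f$. The only cosmetic difference is that you derive the degree bound $\degree(F)\leq\degree(f')+\degree(f)$ via $\codegree^\infty$ where the paper simply cites the definition of degree; both are fine.
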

\begin{proof}
Non-negativity and symmetry follow from the definition. $\intf(X^*,X^*)=0$ since $\iden_{X^*}$ gives a $0$-interleaving. Let us show the triangle inequality. Let $f:X^*\to Y^*,g:Y^* \to X^*$ be an $\epsilon$-interleaving between $X^*,Y^*$ and $f':Y^* \to Z^*,g': Z^* \to Y^*$ be an $\epsilon'$-interleaving between $Y^*,Z^*$. Let us show that $f' \circ f, g \circ g' $ is an $(\epsilon+\epsilon')$-interleaving between $X^*,Z^*$. $$\degree(f'\circ f),\degree(g \circ g') \leq \epsilon + \epsilon',$$ which follows from the definition of degree. By Remark \ref{codegreeinfty} we have
\begin{align*}
\codegree^\infty(g\circ g' \circ f' \circ f, \iden_{X^*}) &\leq \max(\codegree^\infty(g \circ g' \circ f' \circ f, g \circ f),\codegree^\infty(g \circ f,\iden_{X^*})) \\
&\leq \max(\degree(g)+\codegree^\infty(g' \circ f',\iden_{Y^*})+\degree(f),2\epsilon) \\
&\leq \max(\epsilon+2\epsilon'+\epsilon,2\epsilon)=2(\epsilon+\epsilon').
\end{align*}
Similarly $$\codegree^\infty(f'\circ f \circ g \circ g',\iden_{Z^*}) \leq 2(\epsilon+\epsilon'). $$ This completes the proof.
\end{proof}

\begin{definition}
We call $X^*,Y^*$ \textit{equivalent} if $\intf(X^*,Y^*)=0$.
\end{definition}

Because of Theorem \ref{stb}, equivalent filtered simplicial complexes have the same persistent homologies. In the next section we see that weakly isomorphic filtered simplicial complexes  (See Definition \ref{def:iso}) are equivalent. For now, let us give an example to show that the converse is not true.

\begin{example}[Equivalence is weaker than weak isomorphism]\label{simplex-star}
Define $\Delta_n^*$ as the filtered simplicial complex with  vertex set $\{0, \dots, n\}$ and  size function $\size_n(\alpha):=\max \{i: i\in \alpha \}$. Note that for any tripod $(R,p,q)$ between $\Delta_n^*,\Delta_m^*$ we have $\dis(R) \geq |\size_n(p(R))-\size_m(q(R))|=|m-n|,$
hence $\dgh(\Delta_m^*,\Delta_n^*) \geq |m-n|/2.$
We now show that $\intf(\Delta_m^*,\Delta_n^*)=0.$ The topological basis of this is the fact that any two maps onto a simplex are contiguous. Without loss of generality assume that $m \leq n$. Let $\iota: \Delta_m^* \to \Delta_n^*$ be the morphism given by the inclusion of the vertex set and let $\pi: \Delta_n^* \to \Delta_m^*$ be the map given by $k \mapsto \min(m,k)$. Since both maps are size non-increasing and size functions are defined by the maximum, both maps have degree $0$. Also note that (1) $\pi \circ \iota=\iden$, and (2) if $\alpha \subseteq \{0,\dots,n \}$ has maximal element $i$, then so does $\alpha \cup \iota \circ \pi (\alpha)$. Hence, $\size_n(\alpha)=\size_n(\alpha \cup \iota \circ \pi (\alpha)).$ This shows that $\codegree(\iota \circ \pi,\iden)=0$. Therefore $\intf(\Delta_m^*,\Delta_n^*)=0.$ In Section \ref{example} we give a construction generalizing this one (see Remark \ref{simplex-generalization}).  
\end{example}

\subsection{About the definition of $\intf$.} \label{sec:strong-int}

It is possible \cite{mm,dowker} to define a related but \emph{strictly stronger} notion of $\epsilon$-interleaving  between filtered simplicial complexes than the one given in Definition \ref{def:intf}. Given  filtered simplicial complexes $X^*$ and $Y^*$, an $\epsilon$-strong interleaving between $X^*,Y^*$ is a pair of morphisms $f: X^* \to Y^*$, $g: Y^* \to X^*$ such that $$\degree(f),\,\degree(g) \leq \epsilon, \,\,\mbox{and}\,\, \codegree(g \circ f,\iden_{X^*}), \,\codegree(f \circ g,\iden_{Y^*}) \leq 2\epsilon.$$

The difference with Definition \ref{def:intf} is that $\codegree^\infty$ has been replaced  by (the generally larger number) $\codegree$. The problem with this definition is that it does not give a metric as we show next.  Define $\widehat{\intf}(X^*,Y^*)$ as the infimal $\epsilon\geq 0$ such that $X^*$ and $Y^*$ are $\epsilon$-strongly interleaved.

Note that the definition of $\codegree^\infty$ uses chains of morphisms. Such a sequence of morphisms used in the proof of Proposition \ref{interleavingdistance} to show the triangle inequality for $\intf$. The topological basis of the necessity of considering chains is the following:  If simplicial maps $f,f': S \to T$ are contiguous and $g,g':T \to U$ are contiguous, it does not necessarily follow  that $g\circ f,g'\circ f': S \to U$ are contiguous. Instead, what we have is $g \circ f$ is contiguous to $g \circ f'$ which is in turn contiguous to $g' \circ f'$. Note that contiguity is not an equivalence relation between simplicial morphisms. 

\smallskip
\noindent
\textbf{$\widehat{\intf}$ is not a metric.} Let us give a concrete example to show that $\widehat{\intf}$ does not satisfy the triangle inequality. For a non-negative integer $n$, let $X_n^*$ be the filtered simplicial complex with  vertex set $\{v_0,\dots,v_n\}$ and such that (1) the cells of  $X_n^0$ coincide with the set of all  edges of the form $[v_i,v_{i+1}]$, (2) $X_n^t$ is the full simplex for $t\geq 1$, (3) $X_n^t=\emptyset$ for $t < 0$, and (4) $X_n^t=X_n^0$ for $0 \leq t < 1$. Note that $X_n^*$ is included in $X_{n+1}^*$ via the morphism $v_i \mapsto v_i$ for all $i=0,\dots,n$. Also, $X_{n+1}^*$ surjects onto $X_n^*$ via the morphism $v_i \mapsto v_i$ for $i=0,\dots,n$ and $v_{n+1} \mapsto v_n$. By using these maps, we see that $\widehat{\intf}(X_n^*,X_{n+1}^*)$ is $0$. However, for $n\geq 3$, $\widehat{\intf}(X_n^*,X_0^*)$ is not $0$, as no constant map from $X_n^0$ to itself is contiguous to the identity.  Therefore, $\widehat{\intf}$  fails to satisfy the triangle inequality, for otherwise one would have $0<\widehat{\intf}(X_0^*,X_3^*)\leq \widehat{\intf}(X_0^*,X_1^*)+\widehat{\intf}(X_1^*,X_2^*)+\widehat{\intf}(X_2^*,X_3^*) = 0,$ a contradiction.

\subsection{Stability}\label{stability}
In this section we prove Theorem \ref{stb}. We first need the following
\begin{lemma}\label{dis-int}
Let $f: X^* \to Y^*, g: Y^* \to X^*$ be morphisms. Let $R$ be the correspondence between the vertex sets of $X^*,Y^*$ containing the graphs of $f$ and $g$. Then $(f,g)$ is an $\dis(R)$-interleaving.
\end{lemma}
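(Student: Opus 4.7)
The plan is to unpack the definitions and realize that everything follows by choosing an appropriate subset of the tripod $R$ for each $\alpha$ whose distortion bound we want to invoke. Let $r := \dis(R)$. First I would check that $R$, viewed with the two coordinate projections $\pi_X, \pi_Y$, really is a tripod between $X^\ast$ and $Y^\ast$: the vertex sets are finite, and $R$ is surjective onto $V_X$ because it contains the graph of $f$ (and onto $V_Y$ because it contains the graph of $g$).

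The degree bounds are immediate: given $\alpha \subseteq V_X$, set
\[
S_\alpha := \{(x, f(x)) : x \in \alpha\} \subseteq R.
\]
Then $\pi_X(S_\alpha) = \alpha$ and $\pi_Y(S_\alpha) = f(\alpha)$, so from the definition of distortion,
\[
\bigl|\size_X(\alpha) - \size_Y(f(\alpha))\bigr| \leq \dis(R) = r,
\]
giving $\degree(f) \leq r$. The same argument, using $\{(g(y), y) : y \in \beta\} \subseteq R$, gives $\degree(g) \leq r$.

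For the contiguity bounds, I would argue that in fact $\codegree(g \circ f, \iden_{X^\ast}) \leq 2r$ (which by Proposition~\ref{codegreeinfty}(1) yields the $\codegree^\infty$ bound for free). Given $\alpha \subseteq V_X$, consider
\[
T_\alpha := \{(x, f(x)) : x \in \alpha\} \;\cup\; \{(g(f(x)), f(x)) : x \in \alpha\},
\]
which is a subset of $R$ since both graphs are in $R$. Then $\pi_X(T_\alpha) = \alpha \cup (g \circ f)(\alpha)$ and $\pi_Y(T_\alpha) = f(\alpha)$, so the distortion bound yields
\[
\size_X\bigl(\alpha \cup (g\circ f)(\alpha)\bigr) \;\leq\; \size_Y(f(\alpha)) + r \;\leq\; \size_X(\alpha) + 2r,
\]
where the second inequality uses $\degree(f) \leq r$. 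Hence $\codegree(g\circ f, \iden_{X^\ast}) \leq 2r$, and the symmetric argument with $\{(g(y), y)\} \cup \{(g(y), f(g(y)))\}$ handles $\codegree(f \circ g, \iden_{Y^\ast}) \leq 2r$.

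Putting these four inequalities together shows that $(f,g)$ is an $r$-interleaving, i.e.\ a $\dis(R)$-interleaving, as required. There is no real obstacle here beyond bookkeeping: the key design choice is to package each pair $(\alpha, (g\circ f)(\alpha))$ and each pair $(\alpha, f(\alpha))$ as an actual subset of $R$ so that the distortion inequality can be applied directly, and to exploit that $\codegree^\infty \leq \codegree$ so that no chain-of-morphisms construction is needed.
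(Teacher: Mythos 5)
Your proof is correct and proceeds by essentially the same mechanism as the paper's: apply the distortion inequality for $R$ to a carefully chosen subset and use $\degree(f)\leq r$ to chain the two bounds for the contiguity estimate. The only cosmetic difference is that you package the exact subsets $S_\alpha$ and $T_\alpha$ with the desired projections, whereas the paper uses the preimages $p_X^{-1}(\alpha)$ and $p_Y^{-1}(f(\alpha))$ together with monotonicity of the size function; both routes are valid and of the same length.
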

\begin{proof}
Let $p_X$ (resp. $p_Y$) be the projection map from $R$ to the vertex set of $X^*$ (resp. $Y^*)$. Let $\alpha$ be a non-empty subset of the vertex set of $X^*$. Note that $$f(\alpha) \subseteq p_Y(p_X^{-1}(\alpha)). $$ Let $\epsilon:=\dis(R)$. We have
\begin{align*}
\size_{Y^*}(f(\alpha)) &\leq \size_{Y^*}(p_Y(p_X^{-1}(\alpha)))\\
					   &\leq \size_{X^*}(p_X(p_X^{-1}(\alpha)))+\epsilon \\
                       &=\size_{X^*}(\alpha)+\epsilon.
\end{align*}
Hence $\degree(f) \leq \epsilon$. Similarly $\degree(g) \leq \epsilon$.

Note that $p_X(p_Y^{-1}(f(\alpha)))$ contains both $\alpha$ and $g \circ f(\alpha)$. Hence
\begin{align*}
\size_{X^*}(g \circ f (\alpha) \cup \alpha) &\leq \size_{X^*}(p_X(p_Y^{-1}(f(\alpha))))\\
&\leq \size_{Y^*}(p_Y(p_Y^{-1}(f(\alpha))))+\epsilon \\
&= \size_{Y^*}(f(\alpha))+\epsilon\\
&\leq \size_{X^*}(\alpha)+2\epsilon.
\end{align*}
This shows that $$\codegree^\infty(g \circ f, \iden_{X^*})\leq \codegree(g \circ f, \iden_{X^*}) \leq 2\epsilon. $$. Similarly, $$\codegree^\infty(f \circ g,\iden_{Y^*}) \leq 2\epsilon.$$ This completes the proof.
\end{proof}

\begin{proof}[Proof of Theorem \ref{stb}]
By the definition of interleavings for filtered simplicial complexes and Remark \ref{codegreeinfty}, an $\epsilon$-interleaving between filtered simplicial complexes induces an $\epsilon$-interleaving between their persistence modules. Hence $$\intp(\PH_k(X^*),\PH_k(Y^*)) \leq \intf(X^*,Y^*).  $$
Now let $R$ be a correspondence between the vertex sets of $X^*,Y^*$. Then there are morphism $f: X^*\to Y^*,g: Y^* \to X^*$ such that $R$ contains graphs of $f$ and $g$. By Lemma \ref{dis-int} $X^*,Y^*$ are $\dis(R)$-interleaved. Since $R$ was an arbitrary correspondence, by Remark \ref{correspondence} $$\intf(X^*,Y^*) \leq 2\dgh(X^*,Y^*) .$$
\end{proof}

\section{The vertex quasi-distance and simplification }\label{simplification}
We start by giving a definition.
\begin{definition}[The vertex quasi-distance]\label{def:vqd}
Let $X^*$ be a filtered simplicial complex. Given vertices $v,w$ of $X^*$, define the \textit{\deltainv} $\delta_X(v,w)$ to be the minimal $\delta \geq 0$ such that $$\size_X(\alpha \cup \{v\}) + \delta \geq \size_X(\alpha \cup \{w\}), $$ for each non-empty set of vertices $\alpha$.
\end{definition}
Note that taking $\alpha$ as the full vertex set already requires $\delta \geq 0$, hence we can equivalently define $\delta_X(v,w)$ by $\delta_X(v,w):=\max_\alpha \big(\size_X(\alpha \cup \{w\}) - \size_X(\alpha \cup \{v\})\big).$

Although the \deltainv  is not necessarily symmetric (i.e. $\delta_X(v,w)$ may be different from $\delta_X(w,v)$), the following remark shows that it satisfies other properties of a metric. Such structures are called \textit{quasimetric spaces}.

\begin{Remark}[Quasimetric]\label{deltapair} For all vertices $v,v',v''$, 
\begin{enumerate}
\item $\delta_X(v,v)=0.$
\item $\delta_X(v,v')+\delta_X(v',v'') \geq \delta_X(v,v'').$
\end{enumerate}
\end{Remark}

\begin{example}[The case of Vietoris-Rips complexes]\label{ex:rips-ms}
Let $X^*$ be the Vietoris-Rips complex of a finite metric space $(M,d_M)$. Let us show that $\delta_X(x,y)=d_M(x,y).$ Recall that in this case the size function is the diameter. Note that for $\alpha=\{x\}$, $\diam_M(\alpha \cup \{y\}) - \diam_M(\alpha \cup \{x\})=d_M(x,y)-0=d_M(x,y),$
hence  $\delta_X(x,y) \geq d_M(x,y).$ Note that, by triangle inequality for any $z$ we have $d_M(y,z) \leq d_M(x,z) + d_M(x,y)$ and this implies that for any subset $\alpha$ we have $\diam_M(\alpha \cup \{y\}) \leq \diam_M(\alpha \cup \{ x\}) + d_M(x,y)$ and this implies that $\delta_X(x,y) \leq d_M(x,y).$ Hence $\delta_X(x,y)=d_M(x,y).$
\end{example}

\begin{definition}[Codensity function] For each vertex $v$ let
\begin{itemize}
\item $\delta_X(v):=\min_{w \neq v}\delta_X(v,w).$ This is called the codensity of vertex $v$.
\item $\delta(X^*):=\min_v \delta_X(v)$ (minimal codensity of $X^\ast$). 
\end{itemize}
\end{definition}

We introduce this invariant to control the effect of removing a vertex from a filtered simplicial complex on its persistent homology. Before proving Proposition \ref{removal} let us  precisely define what we mean by removing a vertex. 

\begin{definition}[Filtered subcomplex]
A \textit{filtered subcomplex} of a filtered simplicial complex $X^*$ is a filtered simplicial complex $Y^*$ such that for each $t$, $Y^t$ is a subcomplex of $X^t$. We call $Y^*$ a \textit{full} filtered subcomplex if each $Y^i$ is a full subcomplex of $X^t$, precisely a simplex of $X^t$ is a simplex of $Y^t$ if and only if its vertices are in $Y^t$. Note that a full subcomplex is determined by its vertex set. Therefore, if we take a vertex $v$ from a filtered simplicial complex $X^*$ with vertex set $V$, there exists a unique full filtered subcomplex of it such that the vertex set at index $t$ is the vertex set of $X^t$ minus $v$. We denote this subcomplex by $(X-\{v\})^*$.
\end{definition}

\begin{Remark}[Restriction]
The size function of $(X-\{v\})^*$ is the restriction of the size function of $X^*$.
\end{Remark}

\begin{proof}[Proof of Proposition \ref{removal}]
Let $w \neq v$ be a vertex such that $\delta_X(v,w)=\delta_X(v)$. Let $f:X^* \to (X-\{v\})^*$ be the map which is identity on all vertices except $v$ and maps $v$ to $w$. Let $\iota:(X-\{v\})^* \to X^*$ be the inclusion map. Let us show that $(f,\iota)$ is a $\delta_X(v)$-interleaving. We have $\degree(\iota)=0$. Let $\alpha$ be a non-empty subset of the vertex set of $X^*$. If $v \notin \alpha$ then $f(\alpha)=\alpha$. If $v \in \alpha$, then $f(\alpha) \subseteq \alpha \cup \{w\}$, hence $$\size_X(f(\alpha))\leq \size_X(\alpha \cup \{w\}) \leq \size_X(\alpha \cup \{v\})+\delta_X(v) =\size_X(\alpha)+\delta_X(v). $$ Hence $\degree(f)\leq \delta_X(v)$. Since $f \circ \iota = \iden_{(X-\{v\})^*}$, $\codegree^\infty(f \circ \iota, \iden_{(X-v)^*})=0$. If $v \notin \alpha$, then $\alpha \cup \iota \circ f (\alpha)=\alpha$. If $v \in \alpha$ then $\alpha \cup \iota \circ f (\alpha)=\alpha \cup \{w\}$, hence $$\size_X(\alpha \cup \iota \circ f (\alpha))=\size_X(\alpha \cup \{w\}) \leq \size_X(\alpha \cup \{v\})+\delta_X(v)=\size_X(\alpha)+\delta_X(v). $$ Hence $\codegree^\infty(\iota \circ f,\iden_{X^*})\leq \delta_X(v)$. Therefore $X^*,(X-\{v\})^*$ are $\delta_X(v)$-interleaved.
\end{proof}

 \paragraph{Computational consequences.}
Note that $\delta_X(v,w)$ can only become smaller after we remove a vertex since the maximum in the definition of \deltainv is now taken on a smaller set. However, this does not imply that $\delta_X(v)$ also become smaller after removing a vertex, since it is possible that the removed vertex $w$ is the vertex realizing $\delta_X(v)=\delta_X(v,w)$. Still, the observation of the monotonicity of $\delta_X(v,w)$ gives us a method to simplify a filtered simplicial complex while bounding the approximation error in the persistent homology. Let us enumerate the vertex set of $X^*$ as $(v_1,\dots,v_n)$ and let $Q(X^*)$ be the $n \times n$ matrix given by $[\delta_X(v_i,v_j)]_{i,j}$. This method is streamlined in Listing \ref{list:simp}. Note that when the procedure terminates, the interleaving distance $\intf(X^*,Y^*) \leq \mathrm{errorBound}.$ In the following subsections we discuss how to decrease the time complexity and/or error bound obtained from this method, if we are only interested in certain degrees of homology.

\begin{lstlisting}[caption={Simplification Method. Note: at the end of the execution \texttt{Y*} is full subcomplex of \texttt{X*} with vertex set \texttt{W}. Note: the procedure \texttt{ComputeCodensityMatrix()} is discussed in the next section.},label=list:simp,captionpos=t,float,abovecaptionskip=-\medskipamount]
INPUT: X*, N: number of vertices to be removed
OUTPUT: Y*, a full subcomplex and errorBound
SET Q=ComputeCondensityMatrix(X*), errorBound=0, W=Vertex set of X*.
for k from 1 to N
   (i,j)=index of the minimal nondiagonal element of Q
   errorBound=errorBound + Q(i,j)
   remove W(i) from W
   remove i-th row and column from Q
endfor
\end{lstlisting}

\subsection{Computating $\delta_X(v,w)$: The procedure \texttt{ComputeCodensityMatrix()}} The simplification method given in Listing \ref{list:simp} calls the procedure \texttt{ComputeCodensityMatrix()} which calculates the matrix $[\delta_X(v_i,v_j)]_{i,j}$. In this section we explain the mathematical ideas behind it. We will not provide pseudo-code as the procedure will be made evident.

Normally, the definition of $\delta_X(v,w)$ (Definition \ref{def:vqd}) requires  checking all non-empty subsets of the vertex set $V$ of $X^*$, which in total gives us a complexity of $O(2^n)$. However, we can achieve a better complexity  if our filtered simplicial complex has some structure. 

Proposition \ref{clique-delta} below shows that that if the filtered simplicial complex is clique, then we only need to check  singletons in order to calculate $\delta_X(v,w)$. Recall that a simplicial complex is called clique if a simplex is included in it whenever its 1-skeleton is included. A filtered simplicial complex $X^*$ is called clique if each $X^t$ is clique.

\begin{proposition}[The case of clique filtered simplicial complexes]\label{clique-delta}
Let $X^*$ be a clique filtered simplicial complex with  vertex set $V$ and  size function $\size_X$. Then $$\delta_X(v,v')= \max_{w \in V} \big(\size_X(\{v',w \}) - \size_X(\{v,w \})\big).$$
\end{proposition}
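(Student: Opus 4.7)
The inequality $\delta_X(v,v') \geq \max_{w \in V}\bigl(\size_X(\{v',w\}) - \size_X(\{v,w\})\bigr)$ is immediate from the alternative form of Definition \ref{def:vqd} already noted after it, by restricting the maximum over non-empty $\alpha$ to singletons $\alpha = \{w\}$. The content of the proposition is therefore the reverse inequality, and this is where the clique hypothesis enters.

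The key observation I would isolate first is a ``diameter-type'' formula for the size function of a clique filtered simplicial complex: for every non-empty set $\beta$ of vertices,
\[
\size_X(\beta) \;=\; \max_{u,u' \in \beta}\size_X(\{u,u'\}),
\]
where the pair $\{u,u'\}$ is allowed to degenerate to a singleton. This follows because $\beta \in X^t$ iff the $1$-skeleton of $\beta$ lies in $X^t$ (by cliqueness), iff $\size_X(\{u,u'\}) \leq t$ for all $u,u' \in \beta$; monotonicity of $\size_X$ under inclusion then makes the maximum realized. This identity is the only place the clique hypothesis is used.

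Given this, I would fix an arbitrary non-empty $\alpha \subseteq V$ and bound $\size_X(\alpha \cup \{v'\}) - \size_X(\alpha \cup \{v\})$ by the right-hand side in the proposition. If $\size_X(\alpha \cup \{v'\}) \leq \size_X(\alpha \cup \{v\})$, the difference is $\leq 0$, and choosing $w = v$ on the right-hand side gives $\size_X(\{v',v\}) - \size_X(\{v\}) \geq 0$ by monotonicity, which settles this case. Otherwise, the formula above forces the maximum defining $\size_X(\alpha \cup \{v'\})$ to be realized by a pair involving $v'$; a short case analysis rules out the degenerate possibility that this pair is $\{v',v'\}$ (which would force $\alpha = \{v'\}$ and contradict the strict inequality), so the maximum has the form $\size_X(\{v',w\})$ for some $w \in \alpha$. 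Since $w \in \alpha$, we have $\{v,w\} \subseteq \alpha \cup \{v\}$, hence $\size_X(\{v,w\}) \leq \size_X(\alpha \cup \{v\})$ by monotonicity, yielding
\[
\size_X(\alpha \cup \{v'\}) - \size_X(\alpha \cup \{v\}) \;\leq\; \size_X(\{v',w\}) - \size_X(\{v,w\}),
\]
which is bounded by the right-hand side of the proposition. Taking the maximum over $\alpha$ completes the proof.

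I expect the main (and only nontrivial) obstacle to be articulating the clique-to-diameter formula cleanly and handling the singleton edge cases carefully, since the definition of $\size_X$ on singletons need not vanish and so the reductions to pairs must allow $u = u'$.
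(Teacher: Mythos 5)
Your proof is correct and rests on the same key fact as the paper's argument, namely the clique-to-diameter formula $\size_X(\beta)=\max_{u,u'\in\beta}\size_X(\{u,u'\})$ (stated in the paper as Lemma \ref{clique-size}). The paper finishes with a single uniform estimate — substituting $\size_X(\{v',w\})\leq\size_X(\{v,w\})+\epsilon$ inside the max and pulling $\epsilon$ out — whereas you split on the sign of the difference and pick an explicit maximizer; these amount to the same computation, with the paper's version avoiding the case analysis. One small imprecision: the degenerate realizer $\{v',v'\}$ does not literally force $\alpha=\{v'\}$; rather, for any $w\in\alpha$ monotonicity gives $\size_X(\{v',w\})=\size_X(\{v'\})$ so $\{v',w\}$ also realizes the max, and the case $v'\in\alpha$ already contradicts your strict-inequality assumption — the conclusion you draw is still right.
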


We have the following lemma whose proof we omit:
\begin{lemma}\label{clique-size}
Let $X^*$ be a clique filtered simplicial complex with the size function $\size_X$. Then, 
$\size_X(\alpha)=\max_{v,w \in \alpha} \size_X(\{v,w\}).$
\end{lemma}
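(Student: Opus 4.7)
The proof plan is to prove both inequalities between $\size_X(\alpha)$ and $s := \max_{v,w \in \alpha} \size_X(\{v,w\})$ directly, using on one hand the monotonicity of $\size_X$ with respect to inclusion (already noted right after the definition of the size function), and on the other hand the defining property of a clique complex together with the constructibility assumption.

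For the inequality $s \leq \size_X(\alpha)$, I would observe that for every pair of vertices $v,w \in \alpha$ (possibly with $v=w$, in which case $\{v,w\}=\{v\}$ is a singleton), one has $\{v,w\} \subseteq \alpha$, so by monotonicity $\size_X(\{v,w\}) \leq \size_X(\alpha)$. Taking the maximum over all such pairs yields $s \leq \size_X(\alpha)$.

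For the reverse inequality $\size_X(\alpha) \leq s$, I would use that by constructibility the infimum in the definition of the size function is attained, so for every pair $\{v,w\} \subseteq \alpha$ one has $\{v,w\} \in X^{\size_X(\{v,w\})} \subseteq X^s$. Hence every edge of the simplex $\alpha$ belongs to $X^s$, that is, the $1$-skeleton of $\alpha$ lies inside $X^s$. Since $X^*$ is clique, $X^s$ is a flag complex, so this forces $\alpha \in X^s$, giving $\size_X(\alpha) \leq s$. Combining the two inequalities yields the claim.

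The argument is essentially immediate once the clique property is invoked; the only mildly delicate point is making sure constructibility is used so that $\size_X(\{v,w\}) \leq s$ really places $\{v,w\}$ in $X^s$ (not just in $X^{s+\epsilon}$ for every $\epsilon>0$), which is what allows the flag property to be applied at the single index $s$. One should also check the degenerate case where $\alpha$ is a singleton; there $s = \size_X(\{v\}) = \size_X(\alpha)$ by convention on the $v=w$ pair, so the formula remains valid.
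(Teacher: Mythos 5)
Your proof is correct and is the natural argument; the paper explicitly omits a proof of this lemma, and what you wrote is exactly what one would expect to fill that gap. The one-direction-by-monotonicity, other-direction-by-the-flag-property structure is right, and you correctly flag the use of constructibility to place each edge $\{v,w\}$ in $X^s$ (rather than only in $X^{s+\epsilon}$), which is precisely where the realization of the infimum matters. The singleton check is also handled correctly via the $v=w$ convention. For completeness one might also note that the extreme cases $\size_X(\alpha)=\pm\infty$ are consistent: if $\size_X(\alpha)=+\infty$ then by the clique property some edge must also have infinite size (otherwise $\alpha$ would appear at a finite index), and if $\size_X(\alpha)=-\infty$ then every edge lies in every $X^r$; but this is a minor remark and does not affect the validity of your argument.
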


\begin{proof}[Proof of Proposition \ref{clique-delta}]
Let $\epsilon:=\max_{w \in V} \big(\size_X(\{v',w \}) - \size_X(\{v,w \})\big)$. Recall that 
\[\delta_X(v,v')=\max_{\alpha \subseteq V, \alpha \neq \emptyset} \size_X(\alpha \cup \{v'\})-\size_X(\alpha \cup \{v\}), \]
hence $\delta_X(v,v') \geq \epsilon$. Let us show that $\delta_X(v,v') \leq \epsilon.$ Let $\alpha$ be a non-empty subset of $V$. Then by Lemma \ref{clique-size} we have
\begin{align*}
\size_X(\alpha \cup \{v'\}) &=\max( \max_{w,w' \in \alpha} \size_X(\{w,w'\}), \max_{w \in \alpha} \size_X(\{v',w\}) )\\
												&\leq \max( \max_{w,w' \in \alpha} \size_X(\{w,w'\}), \max_{w \in \alpha} \size_X(\{v,w\}) + \epsilon )\\
                                                &\leq \max( \max_{w,w' \in \alpha} \size_X(\{w,w'\}), \max_{w \in \alpha} \size_X(\{v,w\}) )+ \epsilon \\
                                                &=\size_X(\alpha \cup \{v\})+\epsilon.
\end{align*}
Since $\alpha$ was arbitrary, $\delta_X(v,v') \leq \epsilon.$
\end{proof}

\begin{definition}As a generalization of the concept of a clique complex, let us call a simplicial complex $k$-clique if a simplex is contained in it if and only if its $k$-skeleton is contained in it. A clique complex is $1$-clique with respect to this definition.
\end{definition}
By a proof similar to that of Proposition \ref{clique-delta}, we can obtain the following generalization:

\begin{proposition}The case of $k$-clique filtered simplicial complexes\label{k-clique-delta}
Let $k$ be a positive integer and let $X^*$ be a filtered simplicial complex such that for each $t$ $X^t$ is $k$-clique. Then for all vertices $v$ and $v'$,
$$\delta_X(v,v')=\max_{\alpha, 0 < |\alpha| \leq k} \big(\size_X(\alpha \cup \{v'\})-\size_X(\alpha \cup \{v\})\big).$$
\end{proposition}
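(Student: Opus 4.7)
The plan is to follow the blueprint of the proof of Proposition \ref{clique-delta}, with the main new ingredient being a $k$-clique analog of Lemma \ref{clique-size}. That analog reads: for any $k$-clique filtered simplicial complex $X^*$ and any non-empty vertex subset $\alpha$,
$$\size_X(\alpha) \;=\; \max_{\emptyset \neq \beta \subseteq \alpha,\; |\beta| \leq k+1} \size_X(\beta).$$
This is immediate from the definition of $k$-clique: $\alpha \in X^t$ if and only if every face $\beta \subseteq \alpha$ with $|\beta| \leq k+1$ lies in $X^t$, which translates into the displayed equality through the definition of $\size_X$.

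Given this lemma, let $\epsilon$ denote the right-hand side of the claimed formula. The inequality $\delta_X(v,v') \geq \epsilon$ is immediate because the defining maximum for $\delta_X(v,v')$ runs over \emph{all} non-empty vertex subsets $\alpha$, not merely over those of size at most $k$. For the reverse inequality, fix any non-empty vertex subset $\alpha$; the goal is to establish $\size_X(\alpha \cup \{v'\}) \leq \size_X(\alpha \cup \{v\}) + \epsilon$. Applying the lemma to expand $\size_X(\alpha \cup \{v'\})$ as a maximum of $\size_X(\beta)$ over non-empty $\beta \subseteq \alpha \cup \{v'\}$ with $|\beta| \leq k+1$, it suffices to bound each such $\size_X(\beta)$.

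Case split on whether $v' \in \beta$. If $v' \notin \beta$ then $\beta \subseteq \alpha$, so monotonicity of $\size_X$ gives $\size_X(\beta) \leq \size_X(\alpha) \leq \size_X(\alpha \cup \{v\})$. If instead $v' \in \beta$, write $\beta = \gamma \cup \{v'\}$ with $\gamma \subseteq \alpha$ and $|\gamma| \leq k$; provided $\gamma \neq \emptyset$, the definition of $\epsilon$ applied to $\gamma$ yields $\size_X(\gamma \cup \{v'\}) \leq \size_X(\gamma \cup \{v\}) + \epsilon \leq \size_X(\alpha \cup \{v\}) + \epsilon$ (the last step again by monotonicity, since $\gamma \cup \{v\} \subseteq \alpha \cup \{v\}$).

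The main subtlety I anticipate is the boundary case $\gamma = \emptyset$, i.e.\ $\beta = \{v'\}$, where $\epsilon$ cannot be invoked with an empty $\alpha'$. The workaround is to use $\alpha' = \{v\}$, which is non-empty and of size $1 \leq k$: this witness gives $\size_X(\{v,v'\}) \leq \size_X(\{v\}) + \epsilon$, and then sandwiching by monotonicity yields $\size_X(\{v'\}) \leq \size_X(\{v,v'\}) \leq \size_X(\{v\}) + \epsilon \leq \size_X(\alpha \cup \{v\}) + \epsilon$. Once this edge case is disposed of, every occurrence of $\size_X(\beta)$ is bounded by $\size_X(\alpha \cup \{v\}) + \epsilon$, so the expanded maximum is as well, completing the proof. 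The argument is otherwise formally parallel to the $k=1$ case treated in Proposition \ref{clique-delta}.
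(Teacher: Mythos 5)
Your proof is correct and follows the approach the paper itself points to: the paper does not write out a proof for this proposition, remarking only that it follows ``by a proof similar to that of Proposition~\ref{clique-delta}.'' Your $k$-clique analog of Lemma~\ref{clique-size} and the ensuing decomposition-and-bound argument are the faithful generalization of that proof, and the edge case $\gamma = \emptyset$ is handled correctly (one could alternatively note that $\size_X(\{v'\}) \leq \size_X(\{v',w\})$ for any $w \in \alpha$, so that term is already dominated by the non-empty-$\gamma$ case, but your witness $\{v\}$ works equally well).
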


Now we use this to show we can turn a given filtered simplicial complex into one satisfying the assumptions in Proposition \ref{k-clique-delta} without losing persistent homology information in degrees less than $k$.

\begin{proposition}\label{homology-less}
Let $X^*$ be a filtered simplicial complex. Let $Y^*$ be the filtered simplicial complex with the same vertex set as $X^*$ such that for each $t$, a simplex is in $Y^t$ if and only if its $k$-skeleton is in $X^t$. Note that $Y^*$ is well defined and $X^t \subseteq Y^t$ for each $t$. We have:
\begin{enumerate}
\item[(1)]  $Y^t$ is $k$-clique for all $t$.
\item[(2)] $\PH_{<k}(X^*) \cong \PH_{<k}(Y^*).$
\item[(3)] $\size_Y(\alpha)=\max_{\beta \subseteq \alpha, 0<|\beta|\leq k+1} \size_X(\beta)$.
\end{enumerate}
\end{proposition}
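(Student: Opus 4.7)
The plan is to work directly from the set-theoretic definition of $Y^*$, treating the ``$k$-skeleton of $\sigma$'' as the set of all nonempty subsets of $\sigma$ of cardinality at most $k+1$. Before addressing the three claims I would briefly verify that $Y^*$ is indeed a filtered simplicial complex with $X^t \subseteq Y^t$: each $Y^t$ is closed under taking subsets because the $k$-skeleton of a subset of $\sigma$ lies in the $k$-skeleton of $\sigma$; monotonicity $Y^t \subseteq Y^{t'}$ for $t \leq t'$ follows from $X^t \subseteq X^{t'}$; and $X^t \subseteq Y^t$ because any simplex of $X^t$ has its $k$-skeleton inside $X^t$ (since $X^t$ is itself a simplicial complex).

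For part (3), the definition of $Y^*$ gives $\size_Y(\alpha) = \inf\{r : \alpha^{(k)} \subseteq X^r\}$, where $\alpha^{(k)}$ denotes the $k$-skeleton of $\alpha$. This is equivalent to requiring $\size_X(\beta) \leq r$ simultaneously for every nonempty $\beta \subseteq \alpha$ with $|\beta| \leq k+1$, so the infimum equals the maximum of those finitely many values, as claimed. Part (1) is essentially the same unfolding: if the $k$-skeleton of $\sigma$ lies in $Y^t$, then for each face $\tau \subseteq \sigma$ with $|\tau| \leq k+1$ we have $\tau \in Y^t$, and since $\tau$ lies in its own $k$-skeleton this forces $\tau \in X^t$; hence $\sigma^{(k)} \subseteq X^t$, and so $\sigma \in Y^t$ by definition. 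The reverse implication holds because $Y^t$ is a simplicial complex.

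Part (2) is the main conceptual point but still reduces to a direct observation: $X^t$ and $Y^t$ share the same $k$-skeleton for every $t$. One inclusion is part of $X^t \subseteq Y^t$; for the other, if $\sigma \in Y^t$ with $\dim \sigma \leq k$, then $\sigma \in \sigma^{(k)} \subseteq X^t$. The inclusion $X^t \hookrightarrow Y^t$ therefore induces an equality on $j$-chain groups for every $j \leq k$, commutes with boundary operators, and so yields $\mathrm{H}_j(X^t) \cong \mathrm{H}_j(Y^t)$ for all $j < k$, since computing $\mathrm{H}_j$ requires only chains up to dimension $j+1 \leq k$. These isomorphisms are natural in $t$ because the inclusions $X^t \hookrightarrow X^{t'}$ and $Y^t \hookrightarrow Y^{t'}$ act identically on the shared $k$-skeleton, yielding the persistent-homology isomorphism $\PH_{<k}(X^*) \cong \PH_{<k}(Y^*)$. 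I do not anticipate a substantive obstacle; the only care required is in translating between the ``dimension $\leq k$'' and ``cardinality $\leq k+1$'' conventions employed by the $k$-skeleton and the size function respectively.
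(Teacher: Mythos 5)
Your proposal is correct and follows essentially the same route as the paper: part (1) by unfolding that a simplex of dimension at most $k$ is its own $k$-skeleton, part (2) via the observation that $X^t$ and $Y^t$ share the same $k$-skeleton for every $t$ so the inclusion is an isomorphism on chain groups in degrees $\leq k$, and part (3) by translating $\size_Y(\alpha)$ into a condition on the $k$-skeleton of $\alpha$. The only cosmetic difference is that in part (3) you read off the maximum directly from the definition of $Y^t$, whereas the paper first establishes $\size_Y(\alpha)\geq r$ using the coincidence of $k$-skeletons and then shows $\size_Y(\alpha)\leq r$ separately; both amount to the same computation.
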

\begin{proof}
(1) Assume the $k$-skeleton $\alpha^k$ of $\alpha$ is contained in $Y^i$. Then the $k$-skeleton of $\alpha^k$, which is $\alpha^k$ itself is contained in $X^i$. Therefore $\alpha$ is contained in $Y^i$.

\noindent (2) Note that if $\alpha$ is a simplex of dimension less than or equal to $k$, then its $k$-skeleton is itself, hence it is contained in $X^i$ if and only if it is contained in $Y^i$. Therefore the inclusion $X^* \to Y^*$ is identity in the level of $k$-skeleton. Therefore, it induces an isomorphism between homology groups of degree less than $k$.

\noindent (3) Let $r:=\max_{\beta \subseteq \alpha, 0<|\beta|\leq k+1} \size_X(\beta).$ Since the $k$-skeletons of $X^*,Y^*$ are the same, for $|\beta| \leq k+1$ we have $\size_X(\beta)=\size_Y(\beta)$. Therefore
\[\size_Y(\alpha) \geq \max_{\beta \subseteq \alpha, 0<|\beta|\leq k+1} \size_Y(\beta) = \max_{\beta \subseteq \alpha, 0<|\beta|\leq k+1} \size_X(\beta) = r. \]
Now let us show that $\size_Y(\alpha) \leq r.$ We need to show that $\alpha \in Y^r$. By the definition of $r$, the $k$-{th} skeleton of $\alpha$ is contained in $X^r$. This implies that $\alpha$ is in $Y^r$.
\end{proof}

\subsection{Specializing  $\delta_X(v,w)$ according to homology degree}
In this section we refine our ideas so that given a filtered simplicial complex $X^*$ and $k\in\mathbb{N}$, the bound given in Proposition \ref{removal} is better adapted to scenarios when one only wishes to compute persistent homologies $\PH_j(X^*)$ for $j\geq k$. 

Given a filtered simplicial complex and $k\in\mathbb{N}$, let $Y^*=\mathrm{T_k(X^*)}$ be the filtered simplicial complex with the same vertex such that a simplex is in $Y^t$ if it is in $X^t$ and each simplex in its $k$-skeleton is contained in a $k$-simplex of $X^t$. In other words, we remove simplices from $X^t$ which have dimension less than $k$ and are not contained in any $k$-simplex of $X^t$. Note that $Y^*$ is well defined and $Y^t \subseteq X^t$ for each $t$.

\begin{proposition}\label{lower}
 Denote the vertex quasi-distance for $X^*$  by $\delta_X$, and by $\delta_Y$ denote the vertex quasidistance of $Y^*=\mathrm{T}_k(X^*)$. Then, we have:
\begin{itemize}
\item[(1)] $\PH_{\geq k}(Y^*)=\PH_{\geq k}(X^*)$.
\item[(2)] $\size_Y(\alpha)=\min_{\beta \supseteq \alpha,|\beta|\geq k+1} \size_X(\beta).$
\item[(3)] $\delta_Y(v,w) \leq \delta_X(v,w).$
\item[(4)] Let $m\geq k$ be a non-negative integer. If $X^*$ satisfies the property that for each $t$, $\alpha\in X^t$ if and only if the $m$-skeleton of $\alpha$ is in $X^t$, then $Y^*$ satisfies this property too.
\end{itemize}
\end{proposition}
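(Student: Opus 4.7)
The plan is to work through the four items in order, relying on the structural observation that $Y^t$ is obtained from $X^t$ by deleting only those simplices of dimension strictly less than $k$ which are not faces of any $k$-simplex of $X^t$. In particular, $X^t$ and $Y^t$ share the same simplices in all dimensions $\geq k$.

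For (1), I would argue at the chain level. Since $C_\ell(Y^t)=C_\ell(X^t)$ for $\ell\geq k$, and since every element in the image of $\partial_k\colon C_k(X^t)\to C_{k-1}(X^t)$ lies in the subspace spanned by $(k-1)$-faces of $k$-simplices of $X^t$ (which are exactly the $(k-1)$-simplices retained in $Y^t$), the kernel of $\partial_k$ is the same whether computed in $X^t$ or in $Y^t$, while $\partial_{k+1}$ is unchanged. Hence $\mathrm{H}_\ell(Y^t)\cong \mathrm{H}_\ell(X^t)$ for every $\ell\geq k$, and because these isomorphisms are induced by the inclusion $Y^t\hookrightarrow X^t$, they are compatible with the filtration maps, proving $\PH_{\geq k}(Y^*)=\PH_{\geq k}(X^*)$. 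For (2), I would split on cardinality. If $|\alpha|\geq k+1$, the $k$-skeleton of $\alpha$ consists entirely of $k$-simplices which are contained in themselves, so $\alpha\in Y^t\iff \alpha\in X^t$ and the minimum is attained at $\beta=\alpha$. If $|\alpha|<k+1$, then $\alpha\in Y^t$ iff $\alpha$ is a face of some $k$-simplex of $X^t$; any $\beta\supseteq\alpha$ with $|\beta|\geq k+1$ and $\beta\in X^t$ can be trimmed (by removing vertices of $\beta\setminus\alpha$) to a $k$-simplex $\gamma\supseteq \alpha$ in $X^t$ with $\size_X(\gamma)\leq \size_X(\beta)$, and conversely any such $\gamma$ is admissible; taking infima gives the formula, with the $\min$-over-empty-set convention $+\infty$ handling the case where $\alpha$ belongs to no $k$-simplex.

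For (3), fix a non-empty set of vertices $\alpha$ and pick a minimizer $\beta_v\supseteq \alpha\cup\{v\}$ with $|\beta_v|\geq k+1$ and $\size_Y(\alpha\cup\{v\})=\size_X(\beta_v)$; the case $\size_Y(\alpha\cup\{v\})=\infty$ is trivial. Then $\beta_v\cup\{w\}\supseteq\alpha\cup\{w\}$ still has cardinality at least $k+1$, so by the formula from (2),
\[\size_Y(\alpha\cup\{w\})\leq \size_X(\beta_v\cup\{w\}).\]
Writing $\beta_v=\gamma\cup\{v\}$ and applying the definition of $\delta_X(v,w)$ to the non-empty set $\gamma\cup\{v\}$ gives $\size_X(\beta_v\cup\{w\})\leq\size_X(\beta_v)+\delta_X(v,w)$. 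Combining the two bounds yields $\size_Y(\alpha\cup\{w\})-\size_Y(\alpha\cup\{v\})\leq \delta_X(v,w)$, and maximizing over $\alpha$ proves (3).

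For (4), the forward direction is automatic because $Y^t$ is downward closed under face inclusion. For the converse, suppose the $m$-skeleton of $\alpha$ lies in $Y^t$. Then it lies in $X^t$, so the hypothesis on $X^*$ gives $\alpha\in X^t$. Moreover, every simplex $\tau$ in the $k$-skeleton of $\alpha$ satisfies $|\tau|\leq k+1\leq m+1$, so $\tau$ lies in the $m$-skeleton of $\alpha$ and therefore in $Y^t$; in particular $\tau$ is a face of some $k$-simplex of $X^t$, so $\alpha\in Y^t$. The most delicate step of the whole proposition is (2), where the boundary cases and the $+\infty$ convention need care; once (2) is in hand, (3) and (4) become direct bookkeeping, and (1) reduces to the chain-level remark that the deleted cells make no contribution to $\ker\partial_k$.
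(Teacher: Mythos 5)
Your proof is correct and takes essentially the same approach as the paper's: it rests on the same key observation that $Y^t$ and $X^t$ share all simplices of dimension $\geq k$, and each of (1)--(4) is argued by essentially the same computation (choosing a minimizing $\beta \supseteq \alpha\cup\{v\}$ in (3), and checking the $m$-clique condition directly in (4)). You give slightly more chain-level detail in (1) and replace the paper's ``take $s<r$'' argument in (2) with an equivalent trimming observation, but these are cosmetic differences.
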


\begin{proof}

\noindent (1) Note that for $k' \geq k$, the $k'$-simplices of $X^i$ is same with the $k'$-simplices of $Y^i$. Since the homology of degree $k'$ is determined by such cells, the inclusion $Y^* \subseteq X^*$ induces the isomorphism $\PH_{\geq k}(Y^*) \to \PH_{\geq k}(X^*).$ 

\noindent (2) By the identity mentioned above, for $\beta$ with $|\beta|\geq k+1$ we have $\size_Y(\beta)=\size_X(\beta)$. Let us denote $r:=\min_{\beta \supseteq \alpha,|\beta|\geq k+1} \size_X(\beta).$ Then we have
\[\size_Y(\alpha) \leq \min_{\beta \supseteq \alpha,|\beta|\geq k+1} \size_Y(\beta)=\min_{\beta \supseteq \alpha,|\beta|\geq k+1} \size_X(\beta)=r. \]Let us show that $\size_Y(\alpha) \geq r$. Let $s<r$. Let us show that $\size_Y(\alpha)>s$. If the dimension of $\alpha$ is greater than or equal to $k$, then $\size_Y(\alpha)=\size_X(\alpha)=r>s$. Now assume that $\alpha$ has dimension less than $k$. By definition of $r$, every $k$-cell containing $\alpha$ has size strictly greater than $s$, therefore $\alpha$ is not contained in $X^s$. Hence, we have $\size_Y(\alpha) > s$. Since $s<r$ was arbitrary $\size_Y(\alpha) \geq r$.

\noindent (3) Let $\alpha$ be a simplex and $\beta$ be the simplex with dimension greater than or equal to $k$ containing $\alpha \cup \{v\}$  such that $\size_Y(\alpha \cup \{v\})=\size_X(\beta)$.  Then we have
\begin{align*}
\size_Y(\alpha \cup \{w\}) - \size_Y(\alpha \cup \{v\}) &\leq \size_Y(\beta \cup \{w\}) - \size_Y(\alpha \cup \{v\}) \\
																											&=\size_X(\beta \cup \{w\}) - \size_X(\beta) \\
                                                                                                            &=\size_X(\beta \cup \{w\}) - \size_X(\beta \cup \{v\})\\
                                                                                                            &\leq \delta_X(v,w).
\end{align*}
Since $\alpha$ was arbitrary, $\delta_Y(v,w) \leq \delta_X(v,w).$

\noindent (4) Let $\alpha$ be a simplex whose $m$-skeleton $\alpha^m$ is in $Y^t$. Let us show that $\alpha \in Y^t$. Note that $\alpha$ is in $X^t$. If the dimension of $\alpha$ is greater or equal than $k$, then $\alpha$ is in $Y^t$. Now assume that the dimension of $\alpha$ is less than $k$. Then we have that $\alpha^m=\alpha$ is in $Y^t$. 
\end{proof}

\noindent
\textbf{Summary.} If we are only interested in degree $k$ persistent homology we can first apply the \textit{clique-fication} process described in Proposition \ref{homology-less} for $k+1$ so that the calculation of each entry $\delta_X(v,w)$ of the matrix $Q(X^*)$ becomes a $O(n^k)$ task instead of $O(2^n)$, where $n$ is the number of vertices. Then we can apply the process described in Proposition \ref{lower} so that we lower the values of $\delta_X(v,w)$ and get a better error bound for the simplification process. Then we can start our simplification process.  After removing a vertex we have two options, we can either keep working with the original codensity matrix to get the upper bound on the change in persistent homology, or we may want to compute the codensity matrix again, since its elements may decrease after removal. Note that if we remove a vertex from a $k$-clique filtered simplicial complex, it will still be $k$-clique. Hence calculating the codensity matrix does not become more costly after removing a vertex.

\subsection{An application to the Vietoris-Rips filtration of finite metric spaces}\label{sec:vr} 

Let $X^*$ be the Vietoris-Rips complex of a finite metric space $(M,d_M)$. Removing a vertex in this case means passing to the sub-metric space $M-\{x\}$ for some $x$ in $M$. The Gromov-Hausdorff cost of this removal is at least half of the minimal positive distance in $M$. Furthermore, any correspondence between $M$ with $M-\{x\}$  has distortion $\min_{y \in M-\{x\}} d_M(x,y)$. By Theorem \ref{stb}, this bound gives an upper bound for the interleaving distance between the corresponding persistence modules. Let us see how we can improve this bound by applying methods mentioned in this section if we are only interested in $\PH_{k \geq 1}(\mathrm{VR}^*(X))$.

In Example \ref{ex:rips-ms}, we have seen that for $x,y \in M$, $\delta_X(x,y)=d_M(x,y)$, hence $\delta_X(x)$ coincides with the minimal distance to $x$, which is not better than the Gromov-Hausdorff cost. However, we know that we can possibly decrease the value of $\delta_X(x,y)$ by methods described in Proposition \ref{lower}. Let us denote the modified size function described in Proposition \ref{lower} by $\diam_{M,k}$. More precisely, for $\alpha \subseteq M$, we have $$\diam_{M,k}(\alpha):=\min_{\beta \supseteq \alpha,|\beta|\geq k+1} \diam_M(\beta).$$ Note that $\diam_{M,0}=\diam_M$. Let us denote the corresponding filtered simplicial complex by $X^*_k$.  By Proposition \ref{lower}, the persistent homology of  $X^*=\mathrm{VR}(M)$ is the same as that of $X_k^*$. Therefore, if we are interested in persistence homology of degree at least $1$, then instead of working with the Vietoris-Rips complex, we can work with $X_1^*$ which has the advantage of having a smaller codensity function. 

Since the Vietoris-Rips is clique, by Proposition \ref{lower} $X_1^*$ is also clique. Let $\delta_1$ denote the codensity function of $X^*_1$. By Proposition \ref{clique-delta}, we have
$\delta_{1}(x,y)=\max_{p \in M}\big( \diam_1(\{y,p\})-\diam_1(\{x,p\}) \big).$ Let us give a proposition which we use in the example following it to show that $\delta_1(x)=\min_{y\neq x}\delta_1(x,y)$ can be much smaller than $\delta_X(x)$.

\begin{proposition}\label{metric}
Let $M$  be a metric space and $\delta_1$ be the codensity function of $X_1^*$, described as above. Let $x$ be a point in $X$ and $y$ be the closest point to $x$. Then $$ \delta_1(x,y)=\max\Big(0, \max_{p \neq x,y}\big(d_M(y,p)-d_M(x,p)\big)\Big).$$
\end{proposition}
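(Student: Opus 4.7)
The plan is to apply Proposition~\ref{clique-delta} directly. First I note that $X_1^\ast$ is clique: the Vietoris--Rips complex $X^\ast$ is clique, and by Proposition~\ref{lower}(4) (with $m=1$) the operation $T_1$ preserves the $1$-clique (i.e.\ clique) property. Hence Proposition~\ref{clique-delta} yields
\[
\delta_1(x,y) \;=\; \max_{p\in M}\bigl(\diam_{M,1}(\{y,p\}) - \diam_{M,1}(\{x,p\})\bigr).
\]

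Next I would compute $\diam_{M,1}$ on the sets that appear. Using Proposition~\ref{lower}(2), for any two-point set $\{a,b\}$ with $a\neq b$ we have $\diam_{M,1}(\{a,b\}) = d_M(a,b)$, because $\{a,b\}$ itself realizes the minimum (enlarging $\beta$ only increases $\diam_M$). For a singleton $\{a\}$ the minimum in Proposition~\ref{lower}(2) is taken over $\beta \supseteq \{a\}$ with $|\beta|\geq 2$, giving
\[
\diam_{M,1}(\{a\}) \;=\; \min_{q\neq a} d_M(a,q).
\]

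Now I would split the maximum over $p$ into three cases. For $p\notin\{x,y\}$, both terms are ordinary distances and the contribution is exactly $d_M(y,p) - d_M(x,p)$. For $p = x$, the hypothesis that $y$ is the closest point to $x$ forces $\diam_{M,1}(\{x\}) = \min_{q\neq x} d_M(x,q) = d_M(x,y)$, which also equals $\diam_{M,1}(\{y,x\})$, so the contribution is $0$. For $p = y$ the contribution is $\min_{q\neq y} d_M(y,q) - d_M(x,y)$, and taking $q=x$ shows this is $\leq 0$. Taking the maximum of the three cases gives the claimed formula
\[
\delta_1(x,y) \;=\; \max\Bigl(0,\;\max_{p\neq x,y}\bigl(d_M(y,p) - d_M(x,p)\bigr)\Bigr).
\]

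The only subtlety is the $p=x$ case, where one must use the definition of $\diam_{M,1}$ on singletons together with the assumption that $y$ minimizes $d_M(x,\cdot)$; otherwise the ``$0$'' term inside the outer max would not appear naturally. Everything else reduces to bookkeeping based on the clique-fication formulas of Proposition~\ref{lower}.
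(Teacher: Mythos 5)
Your proof is correct and follows essentially the same approach as the paper's: apply Proposition~\ref{clique-delta} to $X_1^*$ (after noting it is clique), then split the max over $p$ into the cases $p=x$, $p=y$, $p\neq x,y$, using $\diam_{M,1}(\{x\})=d_M(x,y)$ (because $y$ minimizes $d_M(x,\cdot)$) to kill the $p=x$ term and observing the $p=y$ term is nonpositive. You spell out the intermediate formulas for $\diam_{M,1}$ on singletons and pairs more explicitly than the paper does (the paper establishes the cliqueness of $X_1^*$ and the clique-delta formula in the paragraph preceding the proposition rather than inside the proof), but the mathematical content is the same.
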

\begin{proof}
\begin{align*}
\delta_1(x,y)&=\max_{p \in M} \big(\diam_{M,1}(\{p,y\})-\diam_{M,1}(\{p,x\})\big)\\
						 &= \max\Big(d_M(x,y)-\diam_{M,1}(\{x\}), \diam_{M,1}(\{ y\}) - d_M(x,y), \max_{p \neq x,y}d_M(p,y)-d_M(x,y)\Big)\\
                         &=\max\Big(0,\max_{p\neq x,y} \big(d_M(y,p)-d_M(x,p)\big)\Big).
\end{align*}
\end{proof}

\begin{example}[Circle with flares]\label{ex:flares}
Let $M$ be a finite finite metric space described as follows: It is a finite set of points selected from a circle and some flares attached to it, see Figure \ref{fig:flares}. Let us show that for an endpoint $x$ of a flare in $M$, $\delta_1(x)=0$. Note that this implies that our method (see Listing \ref{list:simp}) will inductively remove all points in flares without any cost on $\PH_{\geq 1}(\VR^*(M))$ until only the points on the circle are left. Note that this is significantly less than both the Gromov-Hausdorff distance between the original space $M$ and the final space $M'$, and the sum of Gromov-Hausdorff costs of succesively removing single points.

Let $y$ be the closest to point to $x$ in $M$. Since $x$ is a endpoint in a flare, for each $p \neq x$ we have $d_M(x,p)=d_M(x,y)+d_M(y,p)$, in particular $d_M(y,p) \leq d_M(x,p)$. Therefore, by Proposition \ref{metric} we have 
$\delta_1(x) \leq \delta_1(x,y) = 0. $
\end{example}

\section{Classification of filtered simplicial complexes via $d_\mathrm{I}^\mathrm{F}$}\label{classification}
In this section we prove Theorem \ref{cls}.
\begin{definition}[Simple filtered simplicial complex]
A filtered simplicial complex $X^*$ is called \textit{simple} if $\delta(X^*)>0$.
\end{definition}

\begin{lemma}[Non-identity morphisms]\label{delta-identity}
Every non-identity morphism $f:X^* \to X^*$ has $\codegree^\infty(f,\iden_{X^*}) \geq \delta(X^*)$. 
\end{lemma}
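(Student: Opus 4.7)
My plan is to reduce the statement to the analogous claim with $\codegree$ in place of $\codegree^\infty$, and then handle the chain quantifier by picking the last non-identity morphism in any chain connecting $f$ to $\iden_{X^*}$.

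\smallskip
\noindent
\textbf{Step 1 (the key estimate).} I will first show the stronger-looking, one-step version: for every non-identity morphism $g:X^*\to X^*$, $\codegree(g,\iden_{X^*})\geq \delta(X^*)$. Set $r:=\codegree(g,\iden_{X^*})$. Since $g\neq\iden_{X^*}$, there is a vertex $v$ with $g(v)\neq v$; put $w:=g(v)$, so $w\neq v$. I will establish that $\delta_X(v,w)\leq r$, which forces $\delta(X^*)\leq \delta_X(v)\leq \delta_X(v,w)\leq r$. To do this, fix any non-empty set of vertices $\beta$ and apply the defining inequality of $\codegree$ to the (non-empty) set $\alpha:=\beta\cup\{v\}$: $\size_X\bigl(g(\alpha)\cup\alpha\bigr)\leq \size_X(\alpha)+r$. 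Because $g(\alpha)\supseteq\{g(v)\}=\{w\}$ and $\alpha\supseteq\beta$, the union $g(\alpha)\cup\alpha$ contains $\beta\cup\{w\}$, so monotonicity of $\size_X$ with respect to inclusion yields $\size_X(\beta\cup\{w\})\leq \size_X(\beta\cup\{v\})+r$. Maximizing over non-empty $\beta$ gives the desired $\delta_X(v,w)\leq r$.

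\smallskip
\noindent
\textbf{Step 2 (upgrade to $\codegree^\infty$).} Given a non-identity $f:X^*\to X^*$, I will take an arbitrary chain $f=f_0,f_1,\dots,f_n=\iden_{X^*}$ and let $k:=\min\{j:f_j=\iden_{X^*}\}$. Since $f_0=f\neq\iden_{X^*}$ one has $k\geq 1$, and by minimality $f_{k-1}\neq\iden_{X^*}$. Applying Step 1 to $g:=f_{k-1}$ gives $\codegree(f_{k-1},f_k)=\codegree(f_{k-1},\iden_{X^*})\geq \delta(X^*)$, hence $\max_i\codegree(f_{i-1},f_i)\geq\delta(X^*)$. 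As the chain was arbitrary, taking the infimum over chains yields $\codegree^\infty(f,\iden_{X^*})\geq \delta(X^*)$.

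\smallskip
\noindent
\textbf{Anticipated difficulty.} There is no real obstacle: the whole argument is a one-line calculation once one spots the correct choice $\alpha=\beta\cup\{v\}$, and the chain issue is handled by a pigeonhole/minimal-index trick. The only minor point I will mention is that the definition of $\delta_X(v,w)$ ranges over non-empty subsets, which is exactly what is needed since $\alpha=\beta\cup\{v\}$ is automatically non-empty.
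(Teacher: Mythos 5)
Your proof is correct and takes essentially the same route as the paper: isolate an adjacent pair in the chain where a non-identity morphism sits next to the identity, then apply the codegree inequality to $\alpha=\beta\cup\{v\}$ for a vertex $v$ moved by that morphism, and use monotonicity of $\size_X$ to conclude $\delta_X(v,w)\leq r$. The only difference is cosmetic: the paper orients the chain from $\iden_{X^*}$ to $f$ and asserts ``WLOG $f_1$ is non-identity,'' whereas you orient it the other way and pick the minimal index $k$ with $f_k=\iden_{X^*}$, which makes the selection of the adjacent pair slightly more explicit; this is a minor presentational improvement, not a different argument.
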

\begin{proof}
Let $\iden_{X^*}=f_0,f_1,\dots,f_n=f$ be a family of morphisms realizing $\delta:=\codegree^\infty(f,\iden_{X^*})$. Without loss of generality, we can assume that $f_1$ is non-identity. Note that 
$\codegree(f_1,\iden_{X^*}) \leq \delta$. Let $v$ be a vertex such that $w:=f_1(v)$ different from $v$. Now, we have
\begin{align*}
\size_X(\alpha \cup \{w\})&\leq \size_X\big((\alpha \cup \{v\}) \cup (f_1(\alpha)\cup \{w\})\big)\\
						&\leq \size_X(\alpha \cup \{v\})+\delta.
\end{align*}
Since $\alpha$ was arbitrary, $$\codegree^\infty(f,\iden_{X^*})=\delta \geq \delta_X(v,w) \geq \delta(X^*). $$
\end{proof}

\begin{proposition}\label{gh-intf}
Let $X^*,Y^*$ be simple filtered simplicial complexes such that for some $r\geq 0$, $\min\big(\delta(X^*),\delta(Y^*)\big) > r$. If $\intf(X^*,Y^*) \leq r/2$, then $2\,\dgh(X^*,Y^*) = \intf(X^*,Y^*).$ Furthermore, in this case there exists an invertible morphism $f: X^* \to Y^*$ with inverse $g:Y^* \to X^*$ such that the value above is equal to $\max(\degree(f),\degree(g)).$
\end{proposition}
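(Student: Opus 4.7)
The plan is to exploit Lemma \ref{delta-identity} to show that under the simplicity hypothesis any sufficiently good interleaving must be realized by a bijection of vertex sets, and then read off a small-distortion tripod from that bijection. Theorem \ref{stb} already gives $\intf(X^*,Y^*)\leq 2\,\dgh(X^*,Y^*)$, so the content is the reverse inequality (plus the realization statement).

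First, I would fix any $\epsilon$ with $\intf(X^*,Y^*)\leq \epsilon\leq r/2$, and pick an $\epsilon$-interleaving $(f,g)$ between $X^*$ and $Y^*$. Then $\codegree^\infty(g\circ f,\iden_{X^*})\leq 2\epsilon\leq r<\delta(X^*)$, so Lemma \ref{delta-identity} forces $g\circ f=\iden_{X^*}$; symmetrically $f\circ g=\iden_{Y^*}$. Hence $f$ is a bijection of vertex sets with inverse $g$.

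Next, I would consider the graph $R\subseteq V_X\times V_Y$ of $f$ as a tripod (with the two projections being $\iden$ and $f$). Its distortion equals $\max_\alpha \bigl|\size_X(\alpha)-\size_Y(f(\alpha))\bigr|$. The bound $\size_Y(f(\alpha))\leq\size_X(\alpha)+\epsilon$ follows from $\degree(f)\leq\epsilon$, and the bound $\size_X(\alpha)=\size_X(g(f(\alpha)))\leq\size_Y(f(\alpha))+\epsilon$ follows from $\degree(g)\leq\epsilon$ together with $g=f^{-1}$. Therefore $\dis(R)\leq\max(\degree(f),\degree(g))\leq\epsilon$, giving $2\,\dgh(X^*,Y^*)\leq\epsilon$. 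Letting $\epsilon\downarrow\intf(X^*,Y^*)$ yields $2\,\dgh(X^*,Y^*)\leq\intf(X^*,Y^*)$, which combined with stability proves the equality.

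For the ``furthermore'' clause, I would note that $X^*$ and $Y^*$ have finitely many vertices, so there are only finitely many candidate pairs $(f,g)$ of morphisms, and the infimum in the definition of $\intf$ is attained by some pair. By the argument above, this minimizing pair consists of mutually inverse morphisms, and the same tripod computation shows $\max(\degree(f),\degree(g))=\intf(X^*,Y^*)=2\,\dgh(X^*,Y^*)$ (the converse direction $\intf\leq\max(\degree(f),\degree(g))$ being automatic, since an invertible pair makes both compositions literally the identity, so the codegree-infinity terms vanish). The only subtle point is pinpointing Lemma \ref{delta-identity} as the right tool to upgrade a numerical interleaving bound into the algebraic rigidity $g\circ f=\iden$; once that is recognized, everything else is a direct distortion computation.
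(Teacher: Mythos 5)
Your proposal is correct and follows essentially the same route as the paper: invoke Lemma \ref{delta-identity} to upgrade the codegree bound to $g\circ f=\iden_{X^*}$ and $f\circ g=\iden_{Y^*}$, then read off a correspondence from the graph of $f$ and bound its distortion by $\max(\degree(f),\degree(g))$. The only cosmetic difference is that the paper immediately takes $(f,g)$ to realize $\intf(X^*,Y^*)$ (tacitly using the finiteness of vertex sets to attain the infimum), whereas you first argue with any $\epsilon\in[\intf,r/2]$ and then separately justify attainment; this is a matter of bookkeeping, not a different argument.
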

\begin{proof}
By Theorem \ref{stb}, we already know that $2\dgh(X^*,Y^*) \geq \intf(X^*,Y^*)$. Let us show that $2\dgh(X^*,Y^*) \leq \intf(X^*,Y^*)$.

Let $f: X^* \to Y^*$, $g: Y^* \to X^*$ be morphisms realizing the interleaving distance $\epsilon:=\intf(X^*,Y^*)$. Then, $$\codegree^\infty(g \circ f,\iden_{X^*}) \leq 2\epsilon \leq r < \delta(X^*), $$
hence by Lemma \ref{delta-identity} $g \circ f = \iden_{X^*}$. Similarly $f \circ g=\iden_{Y^*}$. Note that this implies that $\epsilon=\max(\degree(f),\degree(g))$. If we define $R$ as the graph of $f$, then $R$ is a correspondence between the vertex sets of $X^*,Y^*$. It is enough to show that $\dis(R) \leq \epsilon$.

Let $\beta$ be a non-empty subset of $R$. Let us denote the projection maps from $R$ to the vertex sets of $X^*,Y^*$ by $p_X,p_Y$ respectively. Let $\alpha:=p_X(\beta)$. Since $R$ is the graph of $f$, $p_Y(\beta)=f(\alpha)$. Now we have,
\begin{align*}
\size_{Y^*}(p_Y(\beta))-\size_{X^*}(p_X(\beta))&=\size_{Y^*}(f(\alpha)) - \size_{X^*}(\alpha)\\
&\leq \degree(f) \leq \epsilon,
\end{align*}
and
\begin{align*}
\size_{X^*}(p_X(\beta))-\size_{Y^*}(p_Y(\beta))&=\size_{X^*}(\alpha) - \size_{Y^*}(f(\alpha))\\
&= \size_{X^*}(g(f(\alpha))) - \size_{Y^*}(f(\alpha)) \\
&\leq \degree(g) \leq \epsilon,
\end{align*}
hence
\[|\size_{X^*}(p_X(\beta))-\size_{Y^*}(p_Y(\beta))| \leq \epsilon. \]
Since $\beta$ was arbitrary, we have
\[ 2\dgh(X^*,Y^*) \leq \dis(R) \leq \epsilon = \intf(X^*,Y^*). \]
\end{proof}

\begin{proof}[Proof of Theorem \ref{cls}]

\noindent\textbf{Existence:} By Proposition \ref{removal}, by removing $v$ such that $\delta_X(v)=0$ one by one, we get a simple filtered simplicial complex $C^*$ such that $X^*$ is equivalent to $C^*$, i.e. $\intf(X^*,C^*)=0$. Note that for a filtered simplicial complex $P^*$ with a single vertex, $\delta_X(P^*)=\infty$ hence it is simple. Since $C^*$ is obtained from $X^*$ by removing vertices, it is a full subcomplex.

\noindent
\textbf{Uniqueness:} Assume $C^*,T^*$ are simple filtered simplicial complexes equivalent to $X^*$. Then, by the triangle inequality for $\intf$ they are equivalent to each other. Hence by Proposition \ref{gh-intf}, taking $r=0$, we see that $C^*,T^*$ are isomorphic, since the map $f$ becomes a size preserving bijection as both $f$ and its inverse has degree $0$.
\end{proof}

\begin{Remark}[Cores and isomorphism]
As it is explained in the proof above, we obtain the core of $X^*$ by removing vertices $v$ with $\delta_X(v)=0$ one by one. Since the core is determined up to isomorphism, the order in which we remove the points does not matter, in any case we remove the same number of points and although we may reach different subcomplexes, they will be necessarily isomorphic.
\end{Remark}

Theorem \ref{cls} implies the following. Let $\mathcal{C}(X)=\{C^*|\,\intf(X^*,C^*)=0\},$ that is, $\mathcal{C}(X^*)$ contains all the filtered simplicial complexes equivalent to $X^*$, and in particular, it contains all those with the same persistent homology as $X^*$. Let $m(X^*)$ be the \emph{minimal possible cardinality} over all vertex sets of elements in $\mathcal{C}(X^*).$  

\begin{corollary}[The core is minimal]\label{coro:min-core}
The vertex set of the core $\skeleton{X^*}$ has minimal cardinality $m(X^*)$.
\end{corollary}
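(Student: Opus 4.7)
The plan is to reduce the corollary to the existence-and-uniqueness statement of Theorem \ref{cls}, which has already been proved. Concretely, I want to show that any $Y^* \in \mathcal{C}(X^*)$ has at least as many vertices as $\skeleton{X^*}$, and then observe that $\skeleton{X^*}$ itself lies in $\mathcal{C}(X^*)$.

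First I would pick an arbitrary $Y^* \in \mathcal{C}(X^*)$. By the existence part of Theorem \ref{cls} applied to $Y^*$, there is a simple filtered simplicial complex $\skeleton{Y^*}$ which is a full subcomplex of $Y^*$ and satisfies $\intf(Y^*,\skeleton{Y^*})=0$. In particular, the vertex set of $\skeleton{Y^*}$ is a subset of the vertex set of $Y^*$, so $|V(\skeleton{Y^*})| \leq |V(Y^*)|$.

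Next, I would use the triangle inequality for $\intf$ (Proposition \ref{interleavingdistance}) to combine $\intf(X^*,Y^*)=0$ and $\intf(Y^*,\skeleton{Y^*})=0$, yielding $\intf(X^*,\skeleton{Y^*})=0$. Since both $\skeleton{X^*}$ and $\skeleton{Y^*}$ are simple filtered simplicial complexes equivalent to $X^*$, the uniqueness part of Theorem \ref{cls} forces them to be isomorphic. Because isomorphism is realized by a size-preserving bijection of vertex sets (Definition \ref{def:iso}), this gives $|V(\skeleton{X^*})| = |V(\skeleton{Y^*})|$.

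Chaining these inequalities yields $|V(\skeleton{X^*})| \leq |V(Y^*)|$ for every $Y^* \in \mathcal{C}(X^*)$, so $|V(\skeleton{X^*})| \leq m(X^*)$. The reverse inequality is immediate: $\skeleton{X^*}$ is itself a member of $\mathcal{C}(X^*)$ because $\intf(X^*,\skeleton{X^*})=0$, so $m(X^*) \leq |V(\skeleton{X^*})|$. There is essentially no obstacle here beyond invoking Theorem \ref{cls} carefully; the only point one must be careful about is to apply the uniqueness of the core to the pair $(\skeleton{X^*},\skeleton{Y^*})$ rather than to $(X^*,Y^*)$, since neither $X^*$ nor $Y^*$ need be simple.
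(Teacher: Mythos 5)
Your proof is correct and follows essentially the same route as the paper: both reduce to the uniqueness part of Theorem \ref{cls} via the triangle inequality for $\intf$. The only cosmetic difference is that you pass an arbitrary $Y^*\in\mathcal{C}(X^*)$ through its own core $\skeleton{Y^*}$ (using the existence part of Theorem \ref{cls}), whereas the paper picks a minimum-cardinality representative and observes directly, via Proposition \ref{removal}, that it must already be simple.
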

\begin{proof}
Let $C^*\in\mathcal{C}(X^*)$ be such that  its vertex sets has minimal cardinality $m(X^*)$. It follows that $C^*$ is simple for otherwise, according to Proposition \ref{removal}, we would be able to reduce its size. Then, by the triangle inequality for $\intf$  (Proposition \ref{interleavingdistance}) and Theorem \ref{cls}, the distance between $\skeleton{X^*}$ and $C^*$ is also zero. But since both $C^*$ and $\skeleton{X^*}$ are simple, Theorem \ref{cls} implies that they have to be isomorphic. In particular, their vertex sets ought to have the same cardinality.
\end{proof}

\section{An example where $\intf \ll \dgh$}\label{example}

Let $X^*$ be a filtered simplicial complex with the size function $\size_X$. Given a vertex $w$ and a real number $r \geq 0$, we define a \textit{single vertex extension} $X_{w,r}^*$ as follows. The underlying vertex set is the vertex set of $X^*$ plus a new vertex $v_0$. We define a size function $\tilde{\size}$ on this new vertex set as follows. We set $\tilde{\size}(\{v_0\}):=\size_X(\{w\})+r$ and for a nonempty subset $\alpha$ of the vertex set of $X^*$, we set
\[\tilde{\size}(\alpha):=\size_X(\alpha), \quad \tilde{\size}(\alpha \cup \{v_0\}):=\size_X(\alpha \cup \{w\})+r. \]
Let us show that $\tilde{\size}$ is monotonic with respect to inclusion. Let $\alpha$ be non-empty subset of the vertex set of $X_{r,w}^*$. If $v_0 \in \alpha$, then $\tilde{\size}(\alpha \cup \{ v_0\})=\tilde{\size}(\alpha)$. If $v_0 \notin \alpha$, then $$\tilde{\size}(\alpha \cup \{v_0 \})=\size_X(\alpha \cup \{w\}) + r \geq \size_X(\alpha) = \tilde{\size}(\alpha).$$
Hence, in any case $\tilde{\size}(\alpha \cup \{v_0 \}) \geq \tilde{\size}(\alpha)$. Now let $\alpha \subseteq \beta$. If $v_0 \notin \beta$, then $$\tilde{\size}(\beta)=\size_X(\beta)\geq \size_X(\alpha) = \tilde{\size}(\alpha).$$ If $v_0 \in \beta$, then $$\tilde{\size}(\beta) = \size_X(\beta \cup \{w\} - \{ v_0\} )+r \geq \size_X(\alpha \cup \{w\} - \{v_0\}) + r = \tilde{\size}(\alpha \cup \{v_0\}) \geq \tilde{\size}(\alpha).$$
Hence $\tilde{\size}$ is a size funtion and $X_{w,r}^*$ is a filtered simplicial complex. Note that $X^*$ is a full subcomplex of $X_{w,r}^*$ obtained by removing the vertex $v_0$. Let us show that $\delta_{X_{w,r}}(v_0)=0.$ For any non-empty subset $\alpha$ of the vertex set of $X_{w,r}^*$, we have
\[\tilde{\size}(\alpha \cup \{w\}) \leq \tilde{\size}(\alpha \cup \{v_0,w\})=\size_X(\alpha-\{v_0\} \cup \{w\})+r=\tilde{\size}(\alpha \cup \{v_0\})\]
Hence $\delta_{X_{w,r}}(v_0,w)=0$, so $\delta_{w,r}(v_0)=0$. By Proposition \ref{removal}, $\intf(X_{w,r}^*,X^*)=0$.

Now let us show that $\dgh(X_{w,r}^*,X^*) \geq r/2$. Let $(Z,p,\tilde{p})$ be any tripod between the vertex sets $V,\tilde{V}$ of $X^*,X_{w,r}^*$. Then
\[\dis(Z) \geq \tilde{\size}(\tilde{p}(Z))-\size_X(p(Z))=\tilde{\size}(\tilde{V})-\size_X(V)=r. \]
Since $Z$ was arbitrary, $\dgh(X_{w,r}^*,X^*)\geq r/2$. Therefore, if we take $r \gg 0$, then
\[ \intf(X_{w,r}^*,X^*)=0 \ll r/2 \leq \dgh(X_{w,r}^*,X^*). \]

\begin{Remark}\label{simplex-generalization}
Recall $\Delta_n^*$ from Example \ref{simplex-star}, with the vertex set $\{0,\dots,n\}$ and the size function given by maximum. Note that $\Delta_{n+1}^*=(\Delta_n^*)_{w=n,r=1}$. This also shows that $\intf(\Delta_m^*,\Delta_n^*)=0$.
\end{Remark}

\newcommand{\etalchar}[1]{$^{#1}$}


\begin{thebibliography}{CDSGO12}

\bibitem[BL17]{bl17}
Andrew~J Blumberg and Michael Lesnick.
\newblock Universality of the homotopy interleaving distance.
\newblock {\em arXiv preprint arXiv:1705.01690}, 2017.

\bibitem[BM13]{bm13}
Andrew~J Blumberg and Michael~A Mandell.
\newblock Quantitative homotopy theory in topological data analysis.
\newblock {\em Foundations of Computational Mathematics}, 13(6):885--911, 2013.

\bibitem[BMW14]{bmw14}
Ulrich Bauer, Elizabeth Munch, and Yusu Wang.
\newblock Strong equivalence of the interleaving and functional distortion
  metrics for {R}eeb graphs.
\newblock {\em arXiv preprint arXiv:1412.6646}, 2014.

\bibitem[BS14]{bs14}
Peter Bubenik and Jonathan~A Scott.
\newblock Categorification of persistent homology.
\newblock {\em Discrete \& Computational Geometry}, 51(3):600--627, 2014.

\bibitem[BS15]{botnan2015approximating}
Magnus~Bakke Botnan and Gard Spreemann.
\newblock Approximating persistent homology in euclidean space through
  collapses.
\newblock {\em Applicable Algebra in Engineering, Communication and Computing},
  26(1-2):73--101, 2015.

\bibitem[Car09]{c09}
Gunnar Carlsson.
\newblock Topology and data.
\newblock {\em Bulletin of the American Mathematical Society}, 46(2):255--308,
  2009.

\bibitem[CCSG{\etalchar{+}}09]{dgh-rips}
Fr{\'e}d{\'e}ric Chazal, David Cohen-Steiner, Leonidas~J Guibas, Facundo
  M{\'e}moli, and Steve~Y Oudot.
\newblock {G}romov-{H}ausdorff stable signatures for shapes using persistence.
\newblock In {\em Computer Graphics Forum}, volume~28, pages 1393--1403. Wiley
  Online Library, 2009.

\bibitem[CDSGO12]{chazal12}
Fr{\'e}d{\'e}ric Chazal, Vin De~Silva, Marc Glisse, and Steve Oudot.
\newblock The structure and stability of persistence modules.
\newblock {\em arXiv preprint arXiv:1207.3674}, 2012.

\bibitem[CDSO14]{cdo14}
Fr{\'e}d{\'e}ric Chazal, Vin De~Silva, and Steve Oudot.
\newblock Persistence stability for geometric complexes.
\newblock {\em Geometriae Dedicata}, 173(1):193--214, 2014.

\bibitem[CJS15]{c15}
Nicholas~J Cavanna, Mahmoodreza Jahanseir, and Donald~R Sheehy.
\newblock A geometric perspective on sparse filtrations.
\newblock {\em arXiv preprint arXiv:1506.03797}, 2015.

\bibitem[CM16]{dowker}
Samir Chowdhury and Facundo M{\'e}moli.
\newblock Persistent homology of asymmetric networks: An approach based on
  {D}owker filtrations.
\newblock {\em arXiv preprint arXiv:1608.05432}, 2016.

\bibitem[DFW14]{dfw14}
Tamal~K Dey, Fengtao Fan, and Yusu Wang.
\newblock Computing topological persistence for simplicial maps.
\newblock In {\em Proceedings of the thirtieth annual symposium on
  Computational geometry}, page 345. ACM, 2014.

\bibitem[DMW16]{mm}
Tamal~K. Dey, Facundo M{\'e}moli, and Yusu Wang.
\newblock Multiscale mapper: Topological summarization via codomain covers.
\newblock In {\em Proceedings of the Twenty-seventh Annual ACM-SIAM Symposium
  on Discrete Algorithms}, SODA '16, pages 997--1013, Philadelphia, PA, USA,
  2016. Society for Industrial and Applied Mathematics.

\bibitem[DSW16]{simba}
Tamal~K Dey, Dayu Shi, and Yusu Wang.
\newblock Simba: An efficient tool for approximating rips-filtration
  persistence via simplicial batch-collapse.
\newblock {\em arXiv preprint arXiv:1609.07517}, 2016.

\bibitem[EH10]{eh10}
Herbert Edelsbrunner and John Harer.
\newblock {\em Computational topology: an introduction}.
\newblock American Mathematical Soc., 2010.

\bibitem[ELZ00]{elz02}
Herbert Edelsbrunner, David Letscher, and Afra Zomorodian.
\newblock Topological persistence and simplification.
\newblock In {\em Foundations of Computer Science, 2000. Proceedings. 41st
  Annual Symposium on}, pages 454--463. IEEE, 2000.

\bibitem[EM12]{edelsbrunner2014persistent}
Herbert Edelsbrunner and Dmitriy Morozov.
\newblock Persistent homology: theory and practice.
\newblock Technical report, Ernest Orlando Lawrence Berkeley National
  Laboratory, Berkeley, CA (US), 2012.

\bibitem[KS13]{ks13}
Michael Kerber and R~Sharathkumar.
\newblock Approximate {\v{c}}ech complex in low and high dimensions.
\newblock In {\em International Symposium on Algorithms and Computation}, pages
  666--676. Springer, 2013.

\bibitem[Les15]{l15}
Michael Lesnick.
\newblock The theory of the interleaving distance on multidimensional
  persistence modules.
\newblock {\em Foundations of Computational Mathematics}, 15(3):613--650, 2015.

\bibitem[MBW13]{mbw13}
Dmitriy Morozov, Kenes Beketayev, and Gunther Weber.
\newblock Interleaving distance between merge trees.
\newblock {\em Discrete and Computational Geometry}, 49:22--45, 2013.

\bibitem[Mem17]{m17}
Facundo Memoli.
\newblock A distance between filtered spaces via tripods.
\newblock {\em arXiv preprint arXiv:1704.03965}, 2017.

\bibitem[She13]{s13}
Donald~R Sheehy.
\newblock Linear-size approximations to the {V}ietoris--{R}ips filtration.
\newblock {\em Discrete \& Computational Geometry}, 49(4):778--796, 2013.

\bibitem[ZC05]{zc05}
Afra Zomorodian and Gunnar Carlsson.
\newblock Computing persistent homology.
\newblock {\em Discrete \& Computational Geometry}, 33(2):249--274, 2005.

\end{thebibliography}
\end{document}